\documentclass{amsart}
\usepackage{tikz-cd}
\usepackage{amsfonts,graphicx,rotating,ctable}
\usepackage{amssymb}
\usepackage[hidelinks]{hyperref}
\usepackage{changepage}
\usepackage{mathtools}
\usepackage{comment}
\usepackage{relsize}

\input diagxy

\makeatletter
\@namedef{subjclassname@2020}{\textup{2020} Mathematics Subject Classification}
\makeatother

\newtheorem{theorem}{Theorem}[section]
\newtheorem{lemma}[theorem]{Lemma}
\newtheorem{corollary}[theorem]{Corollary}
\newtheorem{proposition}[theorem]{Proposition}

\theoremstyle{definition}
\newtheorem{definition}[theorem]{Definition}

\theoremstyle{remark}
\newtheorem{remark}[theorem]{Remark}

\numberwithin{equation}{section}


\newcommand{\ges}{\geqslant}

\newcommand{\R}{\mathbb R}

\newcommand{\F}{\mathbb F}

\newcommand{\gr}{\widetilde G}
\newcommand{\w}{\widetilde w}

\newcommand{\lm}{\operatorname{LM}}

\newcommand{\htt}{\operatorname{ht}}

\newcommand{\cl}{\operatorname{cl}}
\newcommand{\zcl}{\operatorname{zcl}}
\newcommand{\cat}{\operatorname{cat}}
\newcommand{\tc}{\operatorname{TC}}
\newcommand{\arr}{\rightarrow}

\renewcommand{\le}{\leqslant}
\renewcommand{\ge}{\geqslant}

\begin{document}

\title[Heights of SW-classes and zcl of Grassmannians]{Heights of Stiefel--Whitney classes and zero-divisor cup-length of some Grassmann manifolds}


\author[M. Jovanovi\' c]{Milica Jovanovi\' c}
\address{University of Belgrade,
  Faculty of mathematics,
  Studentski trg 16,
  Belgrade,
  Serbia}
\email{milica.jovanovic@matf.bg.ac.rs}

\author[V. Ovaskainen]{Vuk Ovaskainen}
\address{University of Belgrade,
  Faculty of mathematics,
  Studentski trg 16,
  Belgrade,
  Serbia}
\email{vuk.simon.ovaskainen@matf.bg.ac.rs}

\author[B. I. Prvulovi\' c]{Branislav I.\ Prvulovi\'c}
\address{University of Belgrade,
  Faculty of mathematics,
  Studentski trg 16,
  Belgrade,
  Serbia}
\email{branislav.prvulovic@matf.bg.ac.rs}

\author[A. Suboti\' c]{Antonije Suboti\' c}
\address{University of Belgrade,
  Faculty of mathematics,
  Studentski trg 16,
  Belgrade,
  Serbia}
\email{antonije.subotic@matf.bg.ac.rs}

\thanks{The authors are partially supported by the Ministry of Science, Technological Development and Innovation, Republic of Serbia, through the project 451-03-33/2026-03/200104.} 

\subjclass[2020]{Primary 55R40, 55M30; Secondary 13P10}

\keywords{Grassmann manifolds, Stiefel--Whitney classes, topological complexity, cup-length}

\begin{abstract}
We calculate the heights of Stiefel--Whitney classes of the canonical vector bundle over the oriented Grassmannians $\gr_{n,4}\cong SO(n)/(SO(4)\times SO(n-4))$ in the cases $n\in\{2^t-2,2^t-1,2^t,2^t+1\}$, $t\ge4$. Using some additional computations in modulo $2$ cohomology of $\gr_{n,4}$ and the well-known connection between topological complexity and zero-divisor cup-length, we obtain lower bounds for topological complexity of these Grassmannians. We also extend recent results of Rusin, who computed the modulo $2$ cup-length of $\gr_{n,4}$ for $n\in\{2^t-2,2^t-1,2^t\}$, to the case $n=2^t+1$, $t\ge3$.
\end{abstract}

\maketitle

\section{Introduction}
\label{sec:introduction}

A classical invariant of a topological space is the Lusternik--Schnirelmann category. For a (reasonable) space $X$, it is defined as the minimal number $d$ such that there exists an open cover of $X$ consisting of $d$ open subsets each of which is contractible in $X$. An equivalent definition states that the Lusternik--Schnirelmann category of a path connected space $X$ is the (unreduced) Schwarz genus of the path-space fibration $p:PX\arr X$, where $PX=\{\omega\in X^I\mid\omega(0)=x_0\}$ is the space of paths in $X$ starting at a base point $x_0\in X$, and $p(\omega)=\omega(1)$. We will denote by $\cat(X)$ the Lusternik--Schnirelmann category of $X$.

If we take the whole space of paths $X^I$ and the fibration $\pi:X^I\arr X\times X$ given by $\pi(\omega)=(\omega(0),\omega(1))$, the Schwarz genus of this fibration is another invariant, called topological complexity of $X$, denoted by $\tc(X)$, and introduced by Farber in \cite{Farber}.

One of the most common approaches for studying these invariants is the cohomological method, which provides lower bounds for them in terms of cup product. A lower bound for $\cat(X)$ is the $R$-cup-length of $X$ (for any commutative ring $R$), denoted by $\cl_R(X)$, and a lower bound for $\tc(X)$ is the $R$-zero-divisor cup-length of $X$, denoted by $\zcl_R(X)$:
\begin{equation}\label{eq:cat>1+cup}
    \cl_R(X)<\cat(X),
\end{equation} 
\begin{equation}\label{eq:TC>1+zcl}
     \zcl_R(X) < \tc(X).
\end{equation}

The Grassmann manifold $\gr_{n,k}$ of oriented $k$-dimensional subspaces in $\R^n$ (the so-called oriented Grassmannian) is a closed manifold of dimension $k(n-k)$. When it comes to the invariants $\cat(\gr_{n,k})$ and $\tc(\gr_{n,k})$, the case $k=1$ is well established, since $\gr_{n,1}$ is just the sphere $S^{n-1}$, and so $\cat(\gr_{n,1})=2$, while $\tc(\gr_{n,1})=2$ if $n$ is even, and $\tc(\gr_{n,1})=3$ if $n$ is odd (by \cite[Theorem 8]{Farber}). The case $k=2$ is also uninteresting, because $\gr_{n,2}$ is a closed, simply connected, symplectic manifold (the symplectic structure comes from the identification of $\gr_{n,2}$ with the complex quadric $Q_{n-2}=\{[z_0,\ldots,z_{n-1}]\in\mathbb C\mathrm P^{n-1}\mid z_0^2+\cdots+z_{n-1}^2=0\}$), and consequently, both Lusternik--Schnirelmann category and topological complexity are known: $\cat(\gr_{n,2})=n-1$, $\tc(\gr_{n,2})=2n-3$ (see e.g.\ \cite[Proposition 2.3]{pavesic}).

For $k\ge3$ these invariants are not known. However, there has been some development recently in calculating cohomological lower bounds from (\ref{eq:cat>1+cup}) and (\ref{eq:TC>1+zcl}), mostly with modulo $2$ coefficients (i.e., with coefficients in the two-element field $\F_2$). In the case $k=3$, the $\F_2$-cup-length of $\gr_{n,3}$ was studied and computed for different values of $n$ in a number of papers (e.g.\ \cite{Fukaya,Korbas}) and in \cite{CP-cl} it was computed for all $n\ge6$ (it suffices to study $\gr_{n,k}$ for $n\ge2k$, because $\gr_{n,n-k}\cong\gr_{n,k}$). The $\F_2$-zero-divisor cup-length of $\gr_{n,3}$ was studied in \cite{CPR}, where $\zcl_{\F_2}(\gr_{n,3})$ is either exactly determined or placed between bounds that differ by $1$.

In the case $k=4$, some lower and upper bounds for $\cl_{\F_2}(\gr_{n,4})$ were established in \cite[Proposition 6.11]{bane-marko}, while the exact value of $\cl_{\F_2}(\gr_{n,4})$ for $n\in\{2^t-2,2^t-1,2^t\}$ ($t\ge4$) was calculated in a recent paper by Rusin \cite[Theorem 1.1]{Rusin}.

In this paper we extend Rusin's results to the case $n=2^t+1$ ($t\ge3$) by computing $\cl_{\F_2}(\gr_{2^t+1,4})$ in Theorem \ref{cor:cl(G2^t+1,4)}, and in Corollary \ref{prop:finale} we establish lower bounds for $\zcl_{\F_2}(\gr_{n,4})$, which, along with the inequality (\ref{eq:TC>1+zcl}), provide lower bounds for topological complexity of these manifolds. As another topic of the paper, which is intimately connected with cup-length and zero-divisor cup-length, we investigate the heights of the Stiefel--Whitney classes of the tautological vector bundle over $\gr_{n,4}$, and determine them completely for $n\in\{2^t-2,2^t-1,2^t,2^t+1\}$ (Theorem \ref{prop:ht} and Theorem \ref{htw_4}).

The main tool in our calculations are Gr\"obner bases for the ideals closely related to the mod $2$ cohomology of $\gr_{n,4}$. These bases for the cases $n\in\{2^t-2,2^t-1,2^t\}$ were obtained by Rusin in \cite{Rusin}. The corresponding ideal for $n=2^t+1$ is actually the same as the one for $n=2^t$, so we have a Gr\"obner basis for each of the cases we consider.

The paper is organized as follows. In Section \ref{sec:2} we give a necessary background for our calculations. We first define and discuss the main notions that we consider in the paper---the height of a cohomology class, the cup-length and the zero-divisor cup-length; then we give a brief review of the theory of Gr\"obner bases over a field; and finally we collect some basic facts concerning the mod $2$ cohomology of oriented Grassmannians $\gr_{n,k}$. Sections \ref{sec:3}, \ref{sec:4} and \ref{sec:5} are devoted to proving our main results, regarding (respectively) heights of Stiefel--Whitney classes, $\F_2$-cup-length and $\F_2$-zero-divisor cup-length.

\section{Preliminaries}
\label{sec:2}

\subsection{Cup-length, zero-divisor cup-length, and height of an algebra element}

Let $R$ be a commutative ring with identity, and $A=\bigoplus_{i=0}^\infty A^i$ a graded $R$-algebra with identity $1\in A^0$. If an element $a\in A$ belongs to the submodule $A^i$, then we say that $a$ is homogeneous of degree $i$, and we write $|a|=i$.

If $a\in A$ is homogeneous of positive degree, we define the {\it height} of $a$, denoted by $\htt(a)$, as the supremum of the set $\{m\mid a^m\neq0\}$.

The {\it cup-length} of the algebra $A$, denoted by $\cl(A)$, is defined as the supremum of the set of all natural numbers $m$ such that there exist homogeneous elements $a_1,a_2,\ldots,a_m\in A$ of positive degree with $a_1\cdot a_2\cdots a_m\neq0$. Less formally, $\cl(A)$ is the length of the longest nontrivial product of positive-degree elements in $A$.

It is obvious that for every $a\in A$ (which is homogeneous of positive degree) one has $\htt(a)\le\cl(A)$.

\medskip

The tensor product $A\otimes A=\bigoplus_{r=0}^\infty\big(\bigoplus_{i=0}^rA^i\otimes A^{r-i}\big)$ becomes a graded $R$-algebra as well, with the product defined on simple tensors (of homogeneous elements $a,b,c,d\in A$) by
\[(a\otimes b)\cdot(c\otimes d)=(-1)^{|b|\cdot|c|}(a\cdot c)\otimes(b\cdot d).\]
Moreover, if algebra $A$ is commutative (in the graded sense---meaning that $b\cdot c=(-1)^{|b|\cdot|c|}c\cdot b$ for any two homogeneous elements $b,c\in A$), then its product map $A\otimes A\stackrel{\cdot}\longrightarrow A$ is an algebra morphism. In that case, the kernel of the map $A\otimes A\stackrel{\cdot}\longrightarrow A$ is an ideal in $A\otimes A$, and its elements are called {\it zero-divisors}. For instance, if $a\in A$ is any element, then
\[z(a)=1\otimes a-a\otimes1\]
is a zero-divisor. Note that if $a$ is homogeneous, then so is $z(a)$, and $|z(a)|=|a|$.

The {\it zero-divisor cup-length} of $A$, denoted by $\zcl(A)$, is the supremum of the set of all natural numbers $m$ such that there are homogeneous zero-divisors $z_1,z_2,\ldots,z_m\in\ker\big(A\otimes A\stackrel{\cdot}\longrightarrow A\big)$ of positive degree with the property $z_1\cdot z_2\cdots z_m\neq0$ in $A\otimes A$. So, one can say that $\zcl(A)$ is the cup-length of $\ker\big(A\otimes A\stackrel{\cdot}\longrightarrow A\big)$, if this kernel is considered as a graded algebra (although without identity).

\medskip

Now, if $X$ is a topological space, then for any commutative ring (with identity) $R$, the cup product of cohomology classes turns $H^*(X;R)=\bigoplus_{i=0}^\infty H^i(X;R)$ into a graded commutative $R$-algebra, and we define {\it $R$-cup-length} and {\it $R$-zero-divisor cup-length} of $X$ as expected:
\[\cl_R(X):=\cl\big(H^*(X;R)\big) \quad \mbox{ and } \quad \zcl_R(X):=\zcl\big(H^*(X;R)\big).\]

\subsection{Gr\"obner bases}
The main purpose of Gr\"obner bases is deciding whether a given polynomial belongs to a given ideal (in a polynomial ring), i.e., whether the coset of that polynomial is zero in the corresponding quotient ring.
Let us recall some basics concerning Gr\"obner bases over a field. For a thorough introduction into the theory one can read \cite[Chapter 5]{Becker}. In order to simplify our discussion, we will present this review for the field $\F_2$ only, since that is the only case we will be working with in this paper.

Let $\F_2[\overline X]$ be the polynomial ring, where $\overline X$ stands for the (finite) set of variables. Denote by $M$ the set of all monomials in $\F_2[\overline X]$. Let us fix a {\it monomial order} $\preceq$, that is, a well ordering on $M$ with the property that for all $m_1,m_2,m_3\in M$, $m_1\preceq m_2$ implies $m_1m_3\preceq m_2m_3$. 

For a polynomial $f = \sum_{i=1}^r m_i\in\F_2[\overline X]$, where $m_i\in M$ are distinct monomials, define $M(f)=\{m_1,\ldots,m_r\}$ (it is understood that $M(0)=\emptyset$). The {\it leading monomial} of a nonzero polynomial $f$ is $\lm(f)\coloneqq\max M(f)$, where the maximum is taken with respect to the ordering $\preceq$.    

Now let $F\subset\F_2[\overline X]$ be a finite set of nonzero polynomials, and denote by $\lm(F)=\{\lm(f)\mid f\in F\}\subseteq M$ the set of leading monomials of the polynomials in $F$. 
\begin{definition}
    For two polynomials $p,q\in\F_2[\overline X]$ we say that $p$ {\it reduces to $q$ modulo $F$}, and write $p\stackrel{F}\longrightarrow q$, if either $p=q=0$ or there exists a finite sequence of polynomials $p=p_0,p_1,\ldots,p_s=q$ with the following property: for every $j\in\{0,1,\ldots,s-1\}$ there is a polynomial $f_j\in F$ such that $\lm(f_j)\mid\lm(p_j)$, say $\lm(p_j)=m\cdot\lm(f_j)$ for some $m\in M$, and 
    \[p_{j+1}=p_j-m\cdot f_j.\]
\end{definition}
If $I$ is the ideal in $\F_2[\overline X]$ generated by $F$, note that $p\stackrel{F}\longrightarrow q$ implies $p-q\in I$. Therefore, if $p\stackrel{F}\longrightarrow0$, then $p\in I$. However, the opposite implication does not hold in general. A Gr\"obner basis for an ideal is its generating set for which this opposite implication does hold.
\begin{definition}
   Let $F\subset\F_2[\overline X]$ be a finite set of nonzero polynomials, and $I$ the ideal in $\F_2[\overline X]$ generated by $F$. We say that $F$ is a {\it Gr\"obner basis} for $I$ with respect to $\preceq$, if every $p\in I$ reduces to zero modulo $F$. 
\end{definition}
If $p\stackrel{F}\longrightarrow\overline p$, and if $\overline p$ cannot be further reduced modulo $F$, that is, if either $\overline p=0$ or $\lm(\overline p)$ is not divisible by any of the monomials from $\lm(F)$, we say that $\overline p$ is {\it a normal form of $p$ modulo $F$}. It is a theorem that if $F$ is a Gr\"obner basis (for the ideal generated by $F$), then every polynomial has a unique normal form, so we can speak of {\it the} normal form of a polynomial modulo $F$. So, we have the following theorem, which demonstrates the above mentioned main purpose of Gr\"obner bases.
\begin{theorem}\label{normalform}
    Let $F$ be a Gr\"obner basis for the ideal $I$ (with respect to a given monomial order). For an arbitrary polynomial $p\in\F_2[\overline X]$ denote by $\overline p$ its normal form modulo $F$. Then the following equivalence holds:
    \[p\in I \quad \Longleftrightarrow \quad \overline p=0.\]
\end{theorem}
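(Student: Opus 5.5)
The plan is to derive Theorem \ref{normalform} from two facts just mentioned: that reduction modulo $F$ preserves cosets modulo $I$, and that normal forms modulo a Gr\"obner basis are unique. Throughout, $p$ is a polynomial and $\overline p$ is its normal form modulo $F$, so $p\stackrel{F}\longrightarrow\overline p$.

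The first step records that each reduction step $p_{j+1}=p_j-m\cdot f_j$ subtracts an element of $I$, because $f_j\in F\subseteq I$. By induction along the reduction sequence we get $p-\overline p\in I$.

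The implication ``$\Leftarrow$'' is then immediate. If $\overline p=0$, then $p=p-\overline p\in I$.

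For ``$\Rightarrow$'', suppose $p\in I$. By the definition of a Gr\"obner basis, $p\stackrel{F}\longrightarrow0$. Now $0$ is trivially irreducible, so $0$ is itself a normal form of $p$. By uniqueness of normal forms for a Gr\"obner basis, $\overline p=0$.

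The only substantive input is the uniqueness of normal forms, which the text states as a known theorem. If that needs justifying, I would argue as follows. Suppose $q_1,q_2$ are two normal forms of $p$. Then $q_1-q_2\in I$, so by the Gr\"obner property $q_1-q_2$ reduces to $0$. If $q_1\ne q_2$, the first reduction step requires $\lm(q_1-q_2)$ to be divisible by some element of $\lm(F)$. But every monomial of $q_1-q_2$ occurs in $q_1$ or in $q_2$.

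This is the main obstacle. The definition of normal form only says that the \emph{leading} monomial of $\overline p$ is not divisible by any element of $\lm(F)$, not that every monomial of $\overline p$ avoids $\lm(F)$. So I would restrict to fully reduced forms, meaning no monomial is divisible by any element of $\lm(F)$. One can always reach such a form by further reducing the lower terms, since the monomial order is a well-ordering. Uniqueness then holds for these fully reduced forms by the argument above.

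For the theorem itself, however, the direct argument avoids this issue. Let $\overline p$ be any normal form in the weak sense. We know $\overline p\in I$ whenever $p\in I$. If $\overline p\ne0$, then $\overline p$ reduces to $0$ by the Gr\"obner property, which requires $\lm(\overline p)$ to be divisible by some element of $\lm(F)$. That contradicts irreducibility, so $\overline p=0$. I would present this direct version, so the proof does not depend on uniqueness at all.
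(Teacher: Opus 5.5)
Your proof is correct. The paper gives no separate argument. It derives the theorem from the remark that $p\stackrel{F}\longrightarrow q$ implies $p-q\in I$, together with the quoted fact that normal forms modulo a Gr\"obner basis are unique: if $p\in I$, then $0$ is a normal form of $p$, hence it is \emph{the} normal form. That is exactly your first version.

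The version you settle on differs in that it never compares two normal forms. You apply the Gr\"obner property to $\overline p\in I$ itself and note that a nonzero $\overline p$ could not take even a first reduction step. This uses only the definitions, and with the paper's definitions it is the safer route. There, reduction acts only on leading monomials, and a normal form is only required to have an irreducible leading monomial, so normal forms in that sense need not be unique. For example, let $F=\{x,\,x+y,\,y\}\subset\F_2[x,y,z]$ with the lexicographic order $x>z>y$. Then $F$ is a Gr\"obner basis of $(x,y)$, yet $x+z$ reduces both to $z$ and to $y+z$, and both are irreducible.

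So the uniqueness the paper invokes is really that of fully reduced normal forms, where reduction is allowed at every monomial. The paper needs that version anyway for the additive-basis theorem that follows. For the present equivalence, your direct argument works verbatim with the definitions as stated.
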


An obvious consequence of this theorem is that if a nonzero polynomial $p$ cannot be reduced modulo $F$ (i.e., if $\overline p=p$), then $p\notin I$.

Note also that if $\overline p$ is the normal form of $p$ modulo $F$ (so that $\overline p$ cannot be reduced modulo $F$), then uniqueness of normal forms implies that none of the monomials from $M(\overline p)$ is divisible by any of the monomials from $\lm(F)$. Since $p-\overline p\in I$, this means that the cosets of monomials not divisible by any of the elements from $\lm(F)$ generate the quotient vector space $\F_2[\overline X]/I$. Additionally, the uniqueness of normal forms implies linear independence of these cosets. Therefore, the following theorem holds.
\begin{theorem}\label{additive basis}
    If $F$ is a Gr\"obner basis for the ideal $I\trianglelefteq\F_2[\overline X]$ (with respect to a given monomial order), then the set
    \[\Big\{m+I\,\,\Big|\,\, m\in M,\,\, (\forall f\in F)\, \lm(f)\nmid m\Big\}\]
    is a vector space basis for $\F_2[\overline X]/I$.
\end{theorem}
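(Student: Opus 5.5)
The plan is to derive the statement directly from Theorem \ref{normalform} and the uniqueness of normal forms recalled just before it. Write $B=\{m\in M\mid \lm(f)\nmid m \text{ for all } f\in F\}$ for the set of standard monomials. Two things must be shown: the cosets $m+I$ with $m\in B$ span $\F_2[\overline X]/I$, and they are linearly independent over $\F_2$.

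\emph{Spanning.} Take an arbitrary $p\in\F_2[\overline X]$. Since $\preceq$ is a well ordering, we can repeatedly reduce $p$ modulo $F$, always at a monomial of the current polynomial that is divisible by some $\lm(f)$. Each step replaces that monomial by strictly $\preceq$-smaller ones, so the process terminates and yields a normal form $\overline p$. Every step subtracts an element $m\cdot f$ of $I$, hence $p-\overline p\in I$.

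We still need every monomial of $\overline p$, not only the leading one, to lie in $B$. The definition of normal form only controls $\lm(\overline p)$, so here I would invoke the remark above that uniqueness of normal forms forces all of $M(\overline p)$ to avoid $\lm(F)$. Alternatively, I would run a full reduction that also removes lower monomials; this terminates by the same well-ordering argument. Either way, $p+I=\overline p+I$ is a sum of cosets $m+I$ with $m\in B$.

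\emph{Independence.} Suppose $\sum_{i=1}^r m_i\in I$ for distinct $m_1,\dots,m_r\in B$ with $r\ge1$, and set $q=\sum_i m_i$. No monomial of $q$ is divisible by any element of $\lm(F)$, and in particular $\lm(q)$ is not. So $q$ cannot be reduced modulo $F$, and $q$ is its own normal form. On the other hand $q\in I$, so Theorem \ref{normalform} gives $\overline q=0$. Thus $q=0$, which contradicts $r\ge1$, and the cosets are independent.

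The only delicate point is in the spanning step: ensuring the normal form has \emph{all} of its monomials in $B$, rather than just its leading monomial. This rests on uniqueness of normal forms for a Gröbner basis, or equivalently on using complete reduction. Everything else is routine bookkeeping.
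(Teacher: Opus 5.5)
Your proof is correct and is essentially the paper's argument: spanning comes from reducing $p$ to a normal form $\overline p$ with $p-\overline p\in I$ and all monomials standard, and independence comes from the criterion of Theorem \ref{normalform}; the paper phrases both steps via uniqueness of normal forms. Of your two ways to handle the delicate point, rely on complete reduction. Under the top-reduction definition as literally stated in the paper, uniqueness does not force the lower monomials of $\overline p$ to be standard: take the Gr\"obner basis $F=\{a,b\}$ of $(a,b)\trianglelefteq\F_2[a,b,c]$ with the lexicographic order $a>c>b$; then $c+b$ is its own unique normal form, yet it contains the monomial $b\in\lm(F)$. So the remark you would invoke tacitly refers to complete reduction, not top-reduction.
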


\subsection{Background on the cohomology algebra $H^*(\gr_{n,k})$}

We will be working with $\mathbb F_2$ coefficients exclusively, so in the rest of the paper we will write $H^*(X)$ for $H^*(X;\mathbb F_2)$, $\cl(X)$ for $\cl_{\mathbb F_2}(X)$, and $\zcl(X)$ for $\zcl_{\mathbb F_2}(X)$.

Let $k\ge2$ and $n\ge2k$ be integers, and let $\widetilde{\gamma}_{n,k}$ be the ($k$-dimensional) canonical vector bundle over the oriented Grassmann manifold $\gr_{n,k}$. Since this manifold is simply connected, the first Stiefel--Whitney class $w_1(\widetilde\gamma_{n,k})\in H^1(\gr_{n,k})$ is trivial. We will denote by $\w_i$ the Stiefel--Whitney classes $w_i(\widetilde\gamma_{n,k})\in H^i(\gr_{n,k})$ for $2\le i\le k$. Let $W_{n,k}$ be the subalgebra of $H^*(\gr_{n,k})$ generated by these classes $\w_i$, $2\le i\le k$. It is well known (see e.g.\ \cite[p.\ 197]{CP}) that $W_{n,k}$ is actually the image of the map $p^*: H^*(G_{n,k})\rightarrow H^*(\gr_{n,k})$ induced by the universal covering $p:\gr_{n,k}\arr G_{n,k}$ (where $G_{n,k}$ is the Grassmann manifold of {\it unoriented} $k$-subspaces in $\R^n$ and $p$ "forgets" the orientation), and that one has an isomorphism of graded algebras
\begin{equation}\label{eq:imp*2}
W_{n,k} \cong \frac{\mathbb F_2[w_2,\dots,w_k]}{I_{n,k}},
\end{equation}
where $I_{n,k}$ is the ideal generated by the polynomials $g_{n-k+1}^{(k)},\dots ,g_n^{(k)}$, which are defined by the relation
\begin{equation}\label{power_series}
(1+w_2+\cdots+w_k)(g_0^{(k)}+g_1^{(k)}+g_2^{(k)}+\cdots)=1.
\end{equation}

Via the isomorphism (\ref{eq:imp*2}) the Stiefel--Whitney class $\w_i\in W_{n,k}$ maps to the coset of the variable $w_i$, $2\le i\le k$ (it is understood that the degree of $w_i$ is $i$). Therefore, for an arbitrary polynomial $f=f(w_2,\ldots,w_k)\in\F_2[w_2,\ldots,w_k]$ we have the equivalence
\[f(w_2,\ldots,w_k)\in I_{n,k} \quad \Longleftrightarrow \quad f(\w_2,\ldots,\w_k)=0 \,\mbox{ in } \, W_{n,k}\subset H^*(\gr_{n,k}),\]
which will be used throughout the paper.

The following two relations are straightforward from (\ref{power_series}):
	\begin{equation}\label{eq_explicitly_g}
		g_r^{(k)} = \sum_{2a_2+\cdots +ka_k = r} \binom{a_2+\cdots +a_k}{a_2}\cdots\binom{a_{k-1} + a_k}{a_{k-1}}\,  w_2^{a_2}\cdots w_k^{a_k},\quad r\ge0
	\end{equation}
    (the sum is taken over all $(k-1)$-tuples $(a_2,\ldots,a_k)$ of nonnegative integers such that $2a_2+\cdots +ka_k = r$);
	\begin{equation}\label{eq_rec_g}
		g_r^{(k)} =   w_2 g_{r-2}^{(k)} + \cdots +   w_k g_{r-k}^{(k)}, \quad r\ge k.
	\end{equation}
    Additionally, if we define $g_r^{(k)}=0$ for each $r\in\{-1,-2,\ldots,-k+1\}$, it is easy to check that the last equation holds for all integers $r\ge1$. Furthermore, using induction on $s$, we get the following generalized version of relation (\ref{eq_rec_g}) (see e.g.\ \cite[(2.8)]{CJP}):
	\begin{equation}\label{eq_rec_g_gen}
		g_r^{(k)} \!=  w_2^{2^s} g_{r-2\cdot 2^s}^{(k)} + \cdots +   w_k^{2^s} g_{r-k2^s}^{(k)}=\!\sum_{j=2}^kw_j^{2^s} g_{r-j2^s}^{(k)}, \,\,\, r\geqslant1+k(2^s-1),\, s\geqslant0.
	\end{equation}

Another consequence of (\ref{eq_rec_g}) is the fact that not only $g_{n-k+1}^{(k)},\dots ,g_n^{(k)}$ but all polynomials $g_r^{(k)}$ with $r\ge n-k+1$, belong to $I_{n,k}$:
\begin{equation}\label{g_rinIn,k}
    g_r^{(k)}\in I_{n,k} \quad \mbox{ for all } r\ge n-k+1.
\end{equation}
In particular, the polynomials that generate $I_{n+1,k}=(g_{n-k+2}^{(k)},\dots ,g_{n+1}^{(k)})$ belong to $I_{n,k}$, and therefore, the sequence of ideals $I_{n,k}$ decreases with $n$:
    \begin{equation}\label{In,k opada}
       I_{n,k}\supseteq I_{n+1,k} \quad \mbox{ for all } k\ge2 \mbox{ and all } n\ge2k. 
    \end{equation}

A useful fact concerning the polynomials $g_r^{(k)}$ is proved in \cite[Lemma 2.3(ii)]{Korbas}: for $k\in\{3,4\}$ and all $t\ge2$
\begin{equation}\label{eq:g=0}
g_{2^t-3}^{(k)} = 0.
\end{equation}
This equation, along with (\ref{eq_rec_g}), implies 
$g_{2^t+1}^{(4)}=w_2 g_{2^t-1}^{(4)} + w_3 g_{2^t-2}^{(4)}$,
and consequently
\begin{equation}\label{I_2^t=I_2^t+1}
 I_{2^t+1,4}=\big(g_{2^t-2}^{(4)},g_{2^t-1}^{(4)},g_{2^t}^{(4)},g_{2^t+1}^{(4)}\big)=\big(g_{2^t-2}^{(4)},g_{2^t-1}^{(4)},g_{2^t}^{(4)}\big)=I_{2^t,4}.   
\end{equation}
From (\ref{eq:imp*2}) we now obtain the isomorphism of graded algebras
\begin{equation}\label{W_2^t=W_2^t+1}
 W_{2^t,4}\cong W_{2^t+1,4},   
\end{equation}
via which the Stiefel--Whitney class $\w_i\in H^i(\gr_{2^t,4})$ corresponds to the Stiefel--Whitney class $\w_i\in H^i(\gr_{2^t+1,4})$, $i=2,3,4$.

Similarly, using (\ref{eq_rec_g}) and (\ref{eq:g=0}) one can check that
\begin{equation}\label{I_2^t=I_2^t-1}
 I_{2^t,3}=\big(g_{2^t-2}^{(3)},g_{2^t-1}^{(3)},g_{2^t}^{(3)}\big)=\big(g_{2^t-2}^{(3)},g_{2^t-1}^{(3)}\big)=I_{2^t-1,3}  
\end{equation}
(cf.\ \cite[Remark 3.16]{CP-cl}).

Another useful fact was proved in \cite[Lemma 4.6]{CJP}: the variable $w_4$ does not appear in $g_{2^t-3+2^i}^{(4)}$ for any $i\in\{0,1,\ldots,t-1\}$. On the other hand, it is obvious from (\ref{eq_explicitly_g}) that the polynomial $g_r^{(3)}$ is the reduction of $g_r^{(4)}$ modulo $w_4$. We conclude
\begin{equation}\label{3=4}
g_{2^t-3+2^i}^{(4)}=g_{2^t-3+2^i}^{(3)} \quad \mbox{ for all } i\in\{0,1,\ldots,t-1\}.
    \end{equation}

In general, the ideal $I_{n,k}\trianglelefteq\mathbb F_2[w_2,\ldots,w_k]\subset\mathbb F_2[w_2,\ldots,w_k,w_{k+1}]$ need not be a subset of $I_{n,k+1}\trianglelefteq\mathbb F_2[w_2,\ldots,w_k,w_{k+1}]$. Nevertheless, this does hold for $k=3$ and $n$ a power of two. More precisely, we have the following lemma.

\begin{lemma}\label{utapanje}
    For all $t\ge3$ one has $I_{2^t,3}=I_{2^t,4}\cap\mathbb F_2[w_2,w_3]$.
\end{lemma}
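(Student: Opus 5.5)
The plan is to prove the two inclusions separately. The main tool is the ring homomorphism
\[\varphi\colon\F_2[w_2,w_3,w_4]\arr\F_2[w_2,w_3],\qquad w_2\mapsto w_2,\ w_3\mapsto w_3,\ w_4\mapsto0.\]
It restricts to the identity on $\F_2[w_2,w_3]$. By (\ref{eq_explicitly_g}) it satisfies $\varphi(g_r^{(4)})=g_r^{(3)}$ for all $r$, as already observed before (\ref{3=4}).

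For the inclusion $I_{2^t,3}\subseteq I_{2^t,4}\cap\F_2[w_2,w_3]$, first use (\ref{I_2^t=I_2^t-1}) to write $I_{2^t,3}=\big(g_{2^t-2}^{(3)},g_{2^t-1}^{(3)}\big)$. Next apply (\ref{3=4}) with $i=0$ and $i=1$ (allowed since $t\ge3$) to get
\[g_{2^t-2}^{(3)}=g_{2^t-2}^{(4)}\quad\mbox{ and }\quad g_{2^t-1}^{(3)}=g_{2^t-1}^{(4)}.\]
The right-hand sides are among the defining generators $g_{2^t-3}^{(4)},\ldots,g_{2^t}^{(4)}$ of $I_{2^t,4}$. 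So both generators of $I_{2^t,3}$ lie in $I_{2^t,4}$, and trivially in $\F_2[w_2,w_3]$. The ideal $I_{2^t,3}$ is generated inside $\F_2[w_2,w_3]$, and the intersection $I_{2^t,4}\cap\F_2[w_2,w_3]$ is an ideal of $\F_2[w_2,w_3]$. Hence the first inclusion follows.

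For the reverse inclusion, let $f\in I_{2^t,4}\cap\F_2[w_2,w_3]$. Since $f$ does not involve $w_4$, we have $f=\varphi(f)$. Since $\varphi$ is a surjective ring homomorphism, the image of the ideal $I_{2^t,4}=\big(g_{2^t-3}^{(4)},\ldots,g_{2^t}^{(4)}\big)$ is the ideal generated by the images of its generators:
\[\varphi(I_{2^t,4})=\big(g_{2^t-3}^{(3)},g_{2^t-2}^{(3)},g_{2^t-1}^{(3)},g_{2^t}^{(3)}\big).\]
By (\ref{eq:g=0}) with $k=3$ we have $g_{2^t-3}^{(3)}=0$. The remaining three generators generate exactly $I_{2^t,3}$ by definition. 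Therefore $f=\varphi(f)\in I_{2^t,3}$, which proves the reverse inclusion.

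I do not expect a serious obstacle here. The only points needing care are the following.
\begin{itemize}
\item The indices $2^t-2=2^t-3+2^0$ and $2^t-1=2^t-3+2^1$ fall in the range where (\ref{3=4}) applies, which is where $t\ge3$ enters.
\item The identity $\varphi(g_r^{(4)})=g_r^{(3)}$ is immediate from (\ref{eq_explicitly_g}), or equivalently from (\ref{power_series}) with $w_4$ set to $0$.
\end{itemize}
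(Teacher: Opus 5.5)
Your proof is correct and follows essentially the same route as the paper. The forward inclusion uses (\ref{3=4}) for the two generators $g_{2^t-2}^{(3)},g_{2^t-1}^{(3)}$, and the reverse inclusion reduces an ideal-membership expression modulo $w_4$. The only cosmetic difference is that you keep the four defining generators of $I_{2^t,4}$ and discard $g_{2^t-3}^{(3)}=0$ via (\ref{eq:g=0}), whereas the paper first trims $I_{2^t,4}$ to three generators using (\ref{I_2^t=I_2^t+1}).
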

\begin{proof}
    We already know (see (\ref{I_2^t=I_2^t+1}) and (\ref{I_2^t=I_2^t-1})) that
    \[I_{2^t,3} = \big(g_{2^t-2}^{(3)},g_{2^t-1}^{(3)}\big)\trianglelefteq\mathbb F_2[w_2,w_3] \mbox{ and } I_{2^t,4} = \big(g_{2^t-2}^{(4)}, g_{2^t-1}^{(4)}, g_{2^t}^{(4)}\big)\trianglelefteq\mathbb F_2[w_2,w_3,w_4].\]
    
    We certainly have $I_{2^t,3}\subseteq\mathbb F_2[w_2,w_3]$, and so, in order to establish the containment $I_{2^t,3}\subseteq I_{2^t,4}\cap\mathbb F_2[w_2,w_3]$, it suffices to check that $g_{2^t-2}^{(3)},g_{2^t-1}^{(3)}\in I_{2^t,4}$ (since $I_{2^t,4}$ is an ideal in $\mathbb F_2[w_2,w_3,w_4]$). According to (\ref{3=4}), we have $g_{2^t-2}^{(3)} = g_{2^t-2}^{(4)}\in I_{2^t,4}$ and $g_{2^t-1}^{(3)} = g_{2^t-1}^{(4)}\in I_{2^t,4}$. 

    \medskip

    For the reverse containment, let $f\in I_{2^t,4}$ be a polynomial which does not contain the variable $w_4$ (i.e., $f\in\mathbb F_2[w_2,w_3]$). Write $f$ in the form \[f=p_2g_{2^t-2}^{(4)}+p_1g_{2^t-1}^{(4)}+p_0g_{2^t}^{(4)},\]
    for some $p_0,p_1,p_2\in\mathbb F_2[w_2,w_3,w_4]$. If we reduce this equation modulo $w_4$ (this reduction $\mathbb F_2[w_2,w_3,w_4]\rightarrow\mathbb F_2[w_2,w_3,w_4]/(w_4)=\mathbb F_2[w_2,w_3]$ is a ring epimorphism), using the fact $f\in\mathbb F_2[w_2,w_3]$ and (\ref{I_2^t=I_2^t-1}) we obtain
    \[f=\overline p_2g_{2^t-2}^{(3)}+\overline p_1g_{2^t-1}^{(3)}+\overline p_0g_{2^t}^{(3)}\in I_{2^t,3}\]
    ($\overline p_0,\overline p_1,\overline p_2\in\mathbb F_2[w_2,w_3]$ are reductions of $p_0,p_1,p_2$ respectively).
\end{proof}

From now on we will be working mostly with the case $k=4$, and for that reason we will abbreviate $g_r^{(4)}$ to $g_r$. 

A Gr\"obner basis for the ideal $I_{2^t,3}=I_{2^t-1,3}$, with respect to the lexicographic order in which $w_2>w_3$, was detected in \cite{Fukaya}. It is the set
\[F=\big\{g_{2^t-3+2^i}\mid 0\le i\le t-1\big\}=\big\{g_{2^t-3+2^i}^{(3)}\mid 0\le i\le t-1\big\}\]
(see (\ref{3=4})), and the set of the leading monomials is
\begin{equation}\label{LM(F)}
   \lm(F)=\big\{w_2^{2^{t-1}-2^i}w_3^{2^i-1}\mid 0\le i\le t-1\big\}
\end{equation}
(see \cite[Proposition 3.4]{CP}).
Gr\"obner bases $F_n$ for the ideals $I_{n,4}$ in the cases $n\in\{2^t-2,2^t-1,2^t,2^t+1\}$ ($t\ge4$), with respect to the lexicographic order in which $w_4>w_2>w_3$, were presented in \cite[Lemma 3.4]{Rusin}, and they are as follows:
\[F_{2^t}=F_{2^t+1}= F \cup \{g_{2^t } \}, \quad F_{2^t-1}=F \cup  \{g_{2^t -4} \}, \quad F_{2^t-2}=F \cup  \{g_{2^t -4}, g_{2^t -5} \}.\]
In fact, the claim that $F_{2^t}$ is a Gr\"obner basis for $I_{2^t,4}=I_{2^t+1,4}$ is easily checked to be true for $t=3$ as well. The sets of the leading monomials are:
\begin{align*}
    \lm(F_{2^t})&=\lm(F)\cup\big\{w_4^{2^{t-2}}\big\},\\
    \lm(F_{2^t-1})&=\lm(F)\cup\big\{w_4^{2^{t-2}-1}\big\},\\
    \lm(F_{2^t-2})&=\lm(F)\cup\big\{w_4^{2^{t-2}-1},w_3w_4^{2^{t-2}-2}\big\}.
\end{align*}

We now present two lemmas which, along with Lemma \ref{utapanje}, establish some crucial connections between the ideal $I_{2^t,3}$ in $\F_2[w_2,w_3]$ on one side, and the ideals $I_{2^t,4}$ and $I_{2^t-1,4}$ in $\F_2[w_2,w_3,w_4]$ on the other.

\begin{lemma}\label{lem: Vuk}
    If $p_j\in\mathbb F_2[w_2,w_3]$, $0\le j\le2^{t-2}-1$, $t\ge 3$,  then
    \[\sum_{j=0}^{2^{t-2}-1}p_jw_4^j\in I_{2^t,4} \quad \Longleftrightarrow \quad p_j\in I_{2^t,3} \mbox{ for all } j\in\{0,1,\ldots,2^{t-2}-1\}.\]
\end{lemma}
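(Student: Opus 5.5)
The plan is to use the Gröbner basis $F_{2^t}=F\cup\{g_{2^t}\}$ for $I_{2^t,4}$ with respect to the lexicographic order with $w_4>w_2>w_3$, together with Theorem \ref{normalform}. The point is that for polynomials of $w_4$-degree less than $2^{t-2}$, only the elements of $F$ can ever act in a reduction. This is because $\lm(g_{2^t})=w_4^{2^{t-2}}$ does not divide any monomial of $w_4$-degree less than $2^{t-2}$.

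The implication ``$\Leftarrow$'' is immediate. By Lemma \ref{utapanje} we have $I_{2^t,3}\subseteq I_{2^t,4}$. Hence if every $p_j\in I_{2^t,3}$, then every $p_jw_4^j\in I_{2^t,4}$, and so is their sum.

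For ``$\Rightarrow$'', set $P=\sum_j p_jw_4^j$. First reduce each $p_j$ modulo $F$ inside $\F_2[w_2,w_3]$, where $F$ is a Gröbner basis for $I_{2^t,3}$ (lex with $w_2>w_3$), obtaining normal forms $\overline p_j$. Then $p_j-\overline p_j\in I_{2^t,3}\subseteq I_{2^t,4}$, so the polynomial $\overline P=\sum_j\overline p_jw_4^j$ still lies in $I_{2^t,4}$ by the easy direction.

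Now examine the monomials of $\overline P$. Each has the form $m\,w_4^j$, where $m\in M(\overline p_j)$ and $j\le 2^{t-2}-1$. Such a monomial is not divisible by $w_4^{2^{t-2}}$. It is also not divisible by any element of $\lm(F)$ listed in (\ref{LM(F)}): those leading monomials involve only $w_2,w_3$, so divisibility would force them to divide $m$, contradicting that $\overline p_j$ is a normal form. Since this ordering restricted to $\F_2[w_2,w_3]$ is the one used for $F$, $\lm(g^{(3)})=\lm(g^{(4)})$ for these elements by (\ref{3=4}).

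Hence $\overline P$ is irreducible modulo $F_{2^t}$, so it is its own normal form. Theorem \ref{normalform} together with $\overline P\in I_{2^t,4}$ then gives $\overline P=0$. Distinct powers of $w_4$ separate the summands, so every $\overline p_j=0$, and therefore $p_j\in I_{2^t,3}$.

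The only delicate point is consistency of monomial orders: the lex order $w_4>w_2>w_3$ on $\F_2[w_2,w_3,w_4]$ restricts to lex $w_2>w_3$ on $\F_2[w_2,w_3]$. This is what makes the leading monomials in (\ref{LM(F)}) valid in both settings. The case $t=3$ is covered by the stated remark that $F_{2^t}$ is a Gröbner basis there too.
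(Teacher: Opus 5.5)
Your proposal is correct and follows essentially the same route as the paper: replace each $p_j$ by its normal form $\overline p_j$ modulo $F$, note that no monomial of $\sum_j\overline p_jw_4^j$ is divisible by an element of $\lm(F_{2^t})=\lm(F)\cup\{w_4^{2^{t-2}}\}$, so the sum must vanish, and get the converse from Lemma \ref{utapanje}. The only cosmetic difference is that you justify $\sum_j\overline p_jw_4^j\in I_{2^t,4}$ via $I_{2^t,3}\subseteq I_{2^t,4}$, whereas the paper simply notes $F\subset I_{2^t,4}$.
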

\begin{proof} 
Suppose $\sum_{j=0}^{2^{t-2}-1}p_jw_4^j\in I_{2^t,4}$. For $j\in\{0,1,\ldots,2^{t-2}-1\}$ let $\overline p_j\in\mathbb F_2[w_2,w_3]$ be the normal form of $p_j$ modulo $F$ (which is a Gr\"obner basis for $I_{2^t,3}$). Then $p_j\in I_{2^t,3}$ if and only if $\overline p_j=0$ (Theorem \ref{normalform}), and since $F\subset I_{2^t,4}$ we have $\sum_{j=0}^{2^{t-2}-1}\overline p_jw_4^j\in I_{2^t,4}$. But this sum is a polynomial in which no monomial is divisible by any of the leading monomials $\lm(F_{2^t})=\lm(F)\cup\{w_4^{2^{t-2}}\}$ from Gr\"obner basis $F_{2^t}$ of $I_{2^t,4}$. This is possible only if $\sum_{j=0}^{2^{t-2}-1}\overline p_jw_4^j=0$, i.e., $\overline p_j=0$ for all $j$.

Conversely, if $p_j\in I_{2^t,3}$ for all $j\in\{0,1,\ldots,2^{t-2}-1\}$, then by Lemma \ref{utapanje}, $p_j\in I_{2^t,4}$ for all $j\in\{0,1,\ldots,2^{t-2}-1\}$, and so $\sum_{j=0}^{2^{t-2}-1}p_jw_4^j\in I_{2^t,4}$.
\end{proof}
When it comes to the ideal $I_{2^t-1,4}$, using its Gr\"obner basis $F_{2^t-1}$ (whose set of leading monomials is $\lm(F)\cup\{w_4^{2^{t-2}-1}\}$) and the fact $I_{2^t,4}\subseteq I_{2^t-1,4}$, one can easily adjust the proof of Lemma \ref{lem: Vuk} to get a proof of the following lemma.
\begin{lemma}\label{lem: Vuk2}
    If $p_j\in\mathbb F_2[w_2,w_3]$, $0\le j\le2^{t-2}-2$, $t\ge4$, then
    \[\sum_{j=0}^{2^{t-2}-2}p_jw_4^j\in I_{2^{t}-1,4} \quad \Longleftrightarrow \quad p_j\in I_{2^t,3} \mbox{ for all } j\in\{0,1,\ldots,2^{t-2}-2\}.\]
\end{lemma}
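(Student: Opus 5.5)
The argument adapts the proof of Lemma \ref{lem: Vuk}, using the Gr\"obner basis $F_{2^t-1}=F\cup\{g_{2^t-4}\}$ of $I_{2^t-1,4}$ (valid for $t\ge4$). Its set of leading monomials is $\lm(F)\cup\{w_4^{2^{t-2}-1}\}$ with respect to the lexicographic order $w_4>w_2>w_3$. The two implications are handled separately, and both rest on the normal-form machinery of Theorem \ref{normalform}.

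For the direction ($\Rightarrow$), suppose $\sum_{j=0}^{2^{t-2}-2}p_jw_4^j\in I_{2^t-1,4}$.

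\begin{itemize}
\item For each $j$, let $\overline p_j\in\F_2[w_2,w_3]$ be the normal form of $p_j$ modulo $F$. Since $F$ is a Gr\"obner basis for $I_{2^t,3}$, we have $p_j\in I_{2^t,3}$ if and only if $\overline p_j=0$.
\item By (\ref{3=4}), $F\subset I_{2^t,4}$, and by (\ref{In,k opada}), $I_{2^t,4}\subseteq I_{2^t-1,4}$. Hence $p_j-\overline p_j\in I_{2^t-1,4}$ for every $j$.
\item It follows that $q:=\sum_{j=0}^{2^{t-2}-2}\overline p_jw_4^j$ lies in $I_{2^t-1,4}$.
\item Every monomial of $q$ has the form $m\,w_4^j$, where $m$ is a monomial in $w_2,w_3$ not divisible by any element of $\lm(F)$, and $j\le 2^{t-2}-2$.
\item Such a monomial is not divisible by any element of $\lm(F)$. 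An element of $\lm(F)$ divides $m\,w_4^j$ only if it divides $m$, because elements of $\lm(F)$ involve only $w_2,w_3$. Nor is it divisible by $w_4^{2^{t-2}-1}$, because of the bound on $j$.
\item So $q$ is already in normal form modulo $F_{2^t-1}$. Theorem \ref{normalform} then forces $q=0$, and hence $\overline p_j=0$ for all $j$.
\end{itemize}

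For the direction ($\Leftarrow$), suppose each $p_j\in I_{2^t,3}$.

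\begin{itemize}
\item Lemma \ref{utapanje} gives $p_j\in I_{2^t,4}$.
\item Since $I_{2^t,4}\subseteq I_{2^t-1,4}$ by (\ref{In,k opada}), we get $p_j\in I_{2^t-1,4}$.
\item Therefore the whole sum $\sum_{j}p_jw_4^j$ lies in $I_{2^t-1,4}$.
\end{itemize}

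The only delicate point is the claim in the first direction that $q$ cannot be reduced modulo $F_{2^t-1}$. This requires the upper bound $j\le 2^{t-2}-2$, which is exactly why the index range stops one step earlier than in Lemma \ref{lem: Vuk}. It also requires the hypothesis $t\ge4$, under which $F_{2^t-1}$ is known to be a Gr\"obner basis.
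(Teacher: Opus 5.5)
Your proof is correct and follows exactly the route the paper indicates: adapt the proof of Lemma \ref{lem: Vuk} using the Gr\"obner basis $F_{2^t-1}$, whose leading monomials are $\lm(F)\cup\{w_4^{2^{t-2}-1}\}$, for the forward direction, and use Lemma \ref{utapanje} together with $I_{2^t,4}\subseteq I_{2^t-1,4}$ for the converse. One small citation fix: (\ref{3=4}) on its own only identifies the elements of $F$ with the polynomials $g_r^{(4)}$, so the inclusion $F\subset I_{2^t,4}$ also needs (\ref{g_rinIn,k}), or you can simply note that $F\subset F_{2^t-1}\subset I_{2^t-1,4}$.
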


\section{Heights of the Stiefel--Whitney classes}

In this section we determine heights of the Stiefel--Whitney classes $\w_2, \w_3$ and $\w_4$ in $H^*(\gr_{n,4})$ for $n\in\{2^t-2,2^t-1,2^t,2^t+1\}$.

\label{sec:heights}
\label{sec:3}

\begin{theorem}\label{prop:ht}
    Let $n\in\{2^t-2,2^t-1,2^t,2^t+1\}$, $t\ge4$. In $H^*(\gr_{n,4})$ we have
    \begin{enumerate}
        \item $\htt(\w_2) = 2^t-4$,
        \item $\htt(\w_3) = 2^{t-1}-2$.
    \end{enumerate}
    \end{theorem}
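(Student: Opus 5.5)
The plan is to reduce everything to the ideal $I_{2^t,3}\trianglelefteq\F_2[w_2,w_3]$. The first step is a uniform observation for all four values of $n$.

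\emph{Claim.} For $f\in\F_2[w_2,w_3]$ one has $f\in I_{n,4}$ if and only if $f\in I_{2^t,3}$.

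For $n\in\{2^t,2^t+1\}$ this is Lemma \ref{utapanje}. For $n=2^t-1$ it is Lemma \ref{lem: Vuk2} with only $p_0$ nonzero. For $n=2^t-2$ I argue directly with the Gr\"obner basis $F_{2^t-2}=F\cup\{g_{2^t-4},g_{2^t-5}\}$. Reducing $f$ modulo $F$ never introduces $w_4$. The resulting normal form $\overline f$ is a polynomial in $w_2,w_3$ none of whose monomials is divisible by $\lm(F)$, and none is divisible by $w_4^{2^{t-2}-1}$ or $w_3w_4^{2^{t-2}-2}$ either, since these contain $w_4$ (here $t\ge4$). Hence $\overline f$ is also the normal form modulo $F_{2^t-2}$, and Theorem \ref{normalform} gives $f\in I_{2^t-2,4}\iff\overline f=0\iff f\in I_{2^t,3}$. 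Thus $\htt(\w_2)$ and $\htt(\w_3)$ are the same for all four $n$, and equal the heights of $w_2$ and $w_3$ in $\F_2[w_2,w_3]/I_{2^t,3}$. From now on I work only with $F=\{g^{(3)}_{2^t-3+2^i}\}$ and its leading monomials (\ref{LM(F)}).

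For $w_3$, the relevant members of $\lm(F)$ are powers of $w_2$ times $w_3^{2^i-1}$. The only pure $w_3$-power among them would require $2^{t-1}=2^i$, which is excluded. So the lower bound cannot come from non-divisibility alone, and I must compute normal forms. My plan is to use the recursion (\ref{eq_rec_g_gen}) with $s=t-2$ and $s=t-1$ to write specific $g_r^{(3)}$, for $r\ge 2^t-2$, in forms exhibiting $w_3^{2^{t-1}-1}$ (respectively $w_2^{2^t-3}$) as an element of $I_{2^t,3}$. Concretely, I would use $g^{(3)}_r=w_2^{2^s}g^{(3)}_{r-2^{s+1}}+w_3^{2^s}g^{(3)}_{r-3\cdot2^s}$ together with $g^{(3)}_{2^t-3}=0$ from (\ref{eq:g=0}) and explicit low-degree values of $g^{(3)}$, mod $2$ binomial coefficients via (\ref{eq_explicitly_g}) and Lucas' theorem. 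For instance, the coefficient of $w_3^{m}$ in $g^{(3)}_{3m}$ is $1$, and products like $w_3^{2^{t-2}}g^{(3)}_{r'}$ can be iterated. I expect to show $w_3^{2^{t-1}-1}\in I_{2^t,3}$ by combining $g_{2^t-2}^{(3)}$ and $g_{2^t-1}^{(3)}$ multiplied by suitable powers of $w_3$, and similarly $w_2^{2^t-3}\in I_{2^t,3}$ by an induction along the squaring map $p\mapsto p^2$ (which in characteristic $2$ sends $I$-relations in $w_2,w_3$ to relations in $w_2^2,w_3^2$).

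For the lower bounds I would compute the normal form of $w_2^{2^t-4}$ and of $w_3^{2^{t-1}-2}$ modulo $F$ in the lex order $w_2>w_3$, and show it is nonzero. For $w_3^{2^{t-1}-2}$ this may already be immediate: I only need that it is not divisible by any $w_2^{2^{t-1}-2^i}w_3^{2^i-1}$, and the exponent of $w_2$ there is positive for $i\le t-1$. So $w_3^{2^{t-1}-2}$ is already in normal form and nonzero by Theorem \ref{additive basis}; the same remark shows $w_3^m\ne0$ for every $m$. This contradicts any finite height unless $W$ forces vanishing in high degree, so I must recheck the divisibility. For $i=t-1$ the leading monomial is $w_2^{2^{t-2}}w_3^{2^{t-1}-1}$, which contains $w_2$, so pure $w_3$-powers are never leading-divisible. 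That is impossible for a finite-dimensional quotient, so the lex order must actually be $w_3>w_2$ as used in \cite{CP}, and I would fix the orientation of (\ref{LM(F)}) accordingly before proceeding. With the correct orientation, the pure power of $w_3$ that is leading has exponent $2^{t-1}-1$, which gives both $w_3^{2^{t-1}-1}=0$ up to lower terms and $w_3^{2^{t-1}-2}$ as a basis monomial, hence nonzero. For $w_2$ the leading monomial $w_2^{2^{t-1}-1}$ (the $i=0$ element) reduces $w_2^{2^t-4}$, so an explicit reduction is needed.

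The main obstacle is this explicit normal-form computation for $w_2^{2^t-4}$ (nonzero) and the proof that $w_2^{2^t-3}\in I_{2^t,3}$. I would first try to show that $w_2^{2^{t-1}-1}\equiv w_3^{2}\,q$ modulo $I_{2^t,3}$ for an explicit $q$, via $g^{(3)}_{2^t-2}$. Squaring and multiplying would then track $w_2^{2^t-4}$ down to a single basis monomial such as $w_2^{2^{t-1}-2}w_3^{2^{t-1}-2}\cdot(\text{lower})$. Checking nonvanishing there reduces to Lucas-type parity computations of binomial coefficients in (\ref{eq_explicitly_g}).
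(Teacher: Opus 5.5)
Your reduction step is correct. For each $n\in\{2^t-2,2^t-1,2^t,2^t+1\}$ you obtain $I_{n,4}\cap\F_2[w_2,w_3]=I_{2^t,3}$. Your normal-form argument for $n=2^t-2$ is valid: reduce modulo $F$, then observe that the two extra leading monomials of $F_{2^t-2}$ contain $w_4$. It treats that case uniformly, whereas the paper handles it separately via $I_{2^t-1,4}\subseteq I_{2^t-2,4}$ and case-by-case irreducibility checks. However, this only reduces the theorem to computing the heights of $w_2$ and $w_3$ in $\F_2[w_2,w_3]/I_{2^t,3}\cong W_{2^t,3}$, and that is exactly the part you do not supply. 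The paper closes this step by quoting \cite[Theorems 1.2 and 1.3]{CP-cl}. You neither cite nor prove these results, and your sketch of a direct computation contains errors.

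For $\w_3$ you misread (\ref{LM(F)}). The index $i=t-1$ is in the allowed range, and its leading monomial is $w_2^{2^{t-1}-2^{t-1}}w_3^{2^{t-1}-1}=w_3^{2^{t-1}-1}$, not $w_2^{2^{t-2}}w_3^{2^{t-1}-1}$. So there is no conflict with finite-dimensionality and no reason to switch to the order $w_3>w_2$; the list (\ref{LM(F)}) is correct for $w_2>w_3$. Read correctly, part (2) is immediate. The monomial $w_3^{2^{t-1}-1}$ is the lex-smallest monomial of its degree, so the homogeneous element $g^{(3)}_{2^t+2^{t-1}-3}\in F$ with this leading monomial equals it, giving $w_3^{2^{t-1}-1}\in I_{2^t,3}$. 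Meanwhile $w_3^{2^{t-1}-2}$ is divisible by no element of $\lm(F)$. (Your ``$=0$ up to lower terms'' would not by itself give the upper bound.)

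For $\w_2$, nothing is actually proved: you establish neither $w_2^{2^t-3}\in I_{2^t,3}$ nor $w_2^{2^t-4}\notin I_{2^t,3}$. Since $w_2^{2^t-4}$ is divisible by $w_2^{2^{t-1}-1}\in\lm(F)$, you need its genuine normal form. This is the nontrivial relation $w_2^{2^t-4}+w_2^{2^{t-2}-1}w_3^{2^{t-1}-2}\in I_{2^t,3}$ from \cite[(4.3)]{CP-cl}, together with the check that $w_2^{2^{t-2}-1}w_3^{2^{t-1}-2}$ is a standard monomial. Your proposed target $w_2^{2^{t-1}-2}w_3^{2^{t-1}-2}$ cannot play this role. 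It has degree $5\cdot2^{t-1}-10\neq2^{t+1}-8$, and it is itself divisible by $w_2^{2^{t-1}-2}w_3\in\lm(F)$. To complete the proof, either cite the heights of $\w_2$ and $\w_3$ in $H^*(\gr_{2^t,3})$ from \cite{CP-cl}, as the paper does, or give an actual proof of both bounds for $w_2$.
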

    \begin{proof}
    We know that $\w_2^b\w_3^c=0$ in $H^*(\gr_{n,4})$ if and only if $w_2^bw_3^c\in I_{n,4}$. Since $I_{2^t+1,4}=I_{2^t,4}$ (see (\ref{I_2^t=I_2^t+1})), from
    Lemmas \ref{lem: Vuk} and \ref{lem: Vuk2} (applied to $p_0=w_2^bw_3^c$ and $p_j=0$ for $j>0$) we conclude that for $n\in\{2^t-1,2^t,2^t+1\}$, $w_2^bw_3^c\in I_{n,4}$ is equivalent to $w_2^bw_3^c\in I_{2^t,3}$. Therefore, the heights of $\w_2$ and $\w_3$ in these cases are equal to the heights of $\w_2$ and $\w_3$ in $H^*(\gr_{2^t,3})$.
    These heights are calculated in \cite[Theorems 1.2 and 1.3]{CP-cl}, and they are as stated.

    \medskip

    For the remaining case $n=2^t-2$ we have $w_2^{2^t-3},w_3^{2^{t-1}-1}\in I_{2^t-1,4}\subseteq I_{2^t-2,4}$ (see (\ref{In,k opada})), and it remains to check that $w_2^{2^t-4},w_3^{2^{t-1}-2}\notin I_{2^t-2,4}$. We prove this using the Gr\"obner basis $F_{2^t-2}$ for $I_{2^t-2,4}$. No monomial from $\lm(F_{2^t-2})=\lm(F)\cup\big\{w_4^{2^{t-2}-1},w_3w_4^{2^{t-2}-2}\big\}$ divides $w_3^{2^{t-1}-2}$, i.e., $w_3^{2^{t-1}-2}$ cannot be reduced modulo $F_{2^t-2}$, and so $w_3^{2^{t-1}-2}\notin I_{2^t-2,4}$. 

    From \cite[(4.3)]{CP-cl} we have $w_2^{2^t-4}+w_2^{2^{t-2}-1}w_3^{2^{t-1}-2}\in I_{2^t-1,3}=I_{2^t,3}$, and we know that $I_{2^t,3}\subseteq I_{2^t,4}\subseteq I_{2^t-2,4}$ (Lemma \ref{utapanje} and (\ref{In,k opada})). So $w_2^{2^t-4}+w_2^{2^{t-2}-1}w_3^{2^{t-1}-2}\in I_{2^t-2,4}$, which means that it is enough to prove $w_2^{2^{t-2}-1}w_3^{2^{t-1}-2}\notin I_{2^t-2,4}$. It is shown in \cite[p.\ 277]{CP-cl} that no monomial from $\lm(F)$ divides $w_2^{2^{t-2}-1}w_3^{2^{t-1}-2}$, and obviously, none of $w_4^{2^{t-2}-1}$ and $w_3w_4^{2^{t-2}-2}$ divides it either. Since $F_{2^t-2}$ is a Gr\"obner basis, this implies $w_2^{2^{t-2}-1}w_3^{2^{t-1}-2}\notin I_{2^t-2,4}$, and consequently $w_2^{2^t-4}\notin I_{2^t-2,4}$.
    \end{proof}
\begin{remark}
    This theorem (and its proof) is valid for $t=3$ and $n \in \{2^t,2^t+1\}$, i.e., for $n\in\{8,9\}$, as well.
\end{remark}

We are left to determine the height of $\w_4$ for $n$ as in the previous theorem. Let us first formulate the assertion.

\begin{theorem}\label{htw_4}
    For $t\ge4$ and $j\in\{0,1,2\}$, in $H^*(\gr_{2^t-j,4})$ one has
    \[\htt(\w_4) = 2^{t-1}-1-j.\]
    Also, for $t\ge3$, in $H^*(\gr_{2^t+1,4})$ one has $\htt(\w_4) = 2^{t-1}-1$.
    \end{theorem}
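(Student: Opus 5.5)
The plan is to compute normal forms of powers of $w_4$ modulo the Gr\"obner bases $F_n$, and then to use Lemmas \ref{lem: Vuk} and \ref{lem: Vuk2} to reduce membership questions to the ideal $I_{2^t,3}\trianglelefteq\F_2[w_2,w_3]$. There we have the Gr\"obner basis $F$ and the information on heights of $w_2,w_3$ from Theorem \ref{prop:ht} and \cite{CP-cl}.

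\emph{Reduction identity.} Applying (\ref{eq_rec_g_gen}) with $r=2^t$ and $s=t-2$ (allowed, since $2^t\ge 1+4(2^{t-2}-1)$) gives
\[
g_{2^t}=w_2^{2^{t-2}}g_{2^{t-1}}+w_3^{2^{t-2}}g_{2^{t-2}}+w_4^{2^{t-2}}.
\]
Hence modulo $I_{2^t,4}$ we have
\[
w_4^{2^{t-2}}\equiv h:=w_2^{2^{t-2}}g_{2^{t-1}}+w_3^{2^{t-2}}g_{2^{t-2}}.
\]
The polynomials $g_{2^{t-1}}$ and $g_{2^{t-2}}$ have $w_4$-degree at most $2^{t-3}$ and $2^{t-4}$ respectively. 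So for $m=2^{t-2}+r$ with $0\le r<2^{t-2}$, the product $w_4^m\equiv w_4^r h$ has much smaller $w_4$-degree. Iterating the substitution brings any power $w_4^m$ into the form $\sum_{j<2^{t-2}}p_jw_4^j$ with $p_j\in\F_2[w_2,w_3]$. By Lemma \ref{lem: Vuk}, $w_4^m\in I_{2^t,4}=I_{2^t+1,4}$ if and only if every $p_j$ lies in $I_{2^t,3}$.

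\emph{Upper bound for $n=2^t,2^t+1$.} Using $\F_2$-Frobenius, $w_4^{2^{t-1}}=(w_4^{2^{t-2}})^2\equiv h^2$, and
\[
h^2=w_2^{2^{t-1}}g_{2^{t-1}}^2+w_3^{2^{t-1}}g_{2^{t-2}}^2 .
\]
The second summand has a factor $w_3^{2^{t-1}}$, whose normal form modulo $F$ vanishes because $\htt(\w_3)=2^{t-1}-2$. For the first summand, I expand $g_{2^{t-1}}^2$ via (\ref{eq_explicitly_g}) as a polynomial in squares and reduce using $h$ again. I expect every resulting coefficient $p_j$ to carry a factor $w_2^{2^{t-1}}$ multiplied by something killed modulo $I_{2^t,3}$. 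Alternatively, I would show directly that $w_2^{2^{t-1}}g_{2^{t-1}}^2$ lies in the ideal generated by $g_{2^t-2},g_{2^t-1},g_{2^t}$ by applying (\ref{eq_rec_g_gen}) with $s=t-1$ to $g_{2^{t+1}-a}$-type polynomials. The upper bounds for $n=2^t-1,2^t-2$ then need the stronger statements $w_4^{2^{t-1}-1}\in I_{2^t-1,4}$ and $w_4^{2^{t-1}-2}\in I_{2^t-2,4}$. For these I would run the same computation, but now also using the extra leading monomials $w_4^{2^{t-2}-1}$ (and $w_3w_4^{2^{t-2}-2}$) from $F_{2^t-1}$ and $F_{2^t-2}$.

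\emph{Lower bounds.} I need $w_4^{2^{t-1}-1-j}\notin I_{2^t-j,4}$. For $n=2^t$ and $n=2^t+1$, I reduce $w_4^{2^{t-1}-1}=w_4^{2^{t-2}-1}\cdot w_4^{2^{t-2}}\equiv w_4^{2^{t-2}-1}h$. I would then single out one coefficient $p_j$, most naturally that of the highest surviving power of $w_4$, and show it is nonzero modulo $I_{2^t,3}$ by exhibiting a standard monomial (not divisible by anything in (\ref{LM(F)})) in its normal form. A likely candidate is $w_2^{2^{t-2}-1}w_3^{2^{t-1}-2}$ or a similar monomial from \cite{CP-cl}. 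For $n=2^t-1$ the same is done via Lemma \ref{lem: Vuk2}. For $n=2^t-2$ I would reduce directly modulo $F_{2^t-2}$ and check that some monomial in the normal form avoids $\lm(F_{2^t-2})$.

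\emph{Main obstacle.} The hard step is the bookkeeping in the iterated substitution $w_4^{2^{t-2}}\mapsto h$: I must identify precisely which coefficient $p_j$ survives modulo $I_{2^t,3}$, and prove the vanishing needed for the upper bound. My first attempt would be an induction on $t$, relating $g_{2^{t-1}}$ and $g_{2^{t-2}}$ in the $t$-case to the generators of the $(t-1)$-case through (\ref{eq_rec_g_gen}) and Frobenius. This would reduce the coefficient computation to the already-known structure of $I_{2^{t-1},3}$ and $I_{2^t,3}$. For $t=3$ and $n=9$, the base of the induction can be checked directly with the basis $F_8$.
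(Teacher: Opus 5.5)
Your framework matches the paper's: reduce every membership question to $I_{2^t,3}$ via Lemmas~\ref{lem: Vuk} and~\ref{lem: Vuk2} and the Gr\"obner bases. There is a genuine gap, though. Every step that carries content is deferred (``I expect'', ``run the same computation'', ``a likely candidate''), so none of the required inequalities is actually proved.

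\textbf{Upper bounds.} For $j=0$, squaring your relation leaves $w_2^{2^{t-1}}g_{2^{t-1}}^2$, and you never show this lies in $I_{2^t,4}$. The fix is to apply \eqref{eq_rec_g_gen} one level higher, at $r=2^{t+1}$, $s=t-1$. This gives $w_4^{2^{t-1}}=g_{2^{t+1}}+w_2^{2^{t-1}}g_{2^t}+w_3^{2^{t-1}}g_{2^{t-1}}$. Now the coefficient of $w_2^{2^{t-1}}$ is $g_{2^t}$ itself, so every term is visibly in $I_{2^t,4}$. For $j=1,2$ you need genuinely new polynomial identities, and nothing in your sketch produces them. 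The paper first uses Rusin's expansion $g_{2^t-4}=\sum_{i=0}^{t-2}w_4^{2^i-1}(g^{(3)}_{2^{t-i}-4})^{2^i}$ to prove $w_4^{2^{t-1}-1}=g_{2^{t+1}-4}+w_2^{2^{t-1}}g_{2^t-4}+w_3^{2^{t-1}}g_{2^{t-1}-4}$. This lies in $I_{2^t-1,4}$ because $g_{2^t-4}$ does. Next it multiplies this identity by $w_3$, uses \eqref{eq_rec_g} and \eqref{w_3kvadrat}, and cancels $w_4$ and then $w_3$. The result is $w_4^{2^{t-1}-2}=\alpha_tg_{2^t-2}+g_{2^t-4}^2+\beta_tg_{2^t-5}+w_3^{2^{t-1}-1}g_{2^{t-1}-5}$, which lies in $I_{2^t-2,4}$.

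\textbf{Lower bounds.} The bookkeeping you postpone is the whole difficulty, and your heuristics point the wrong way. In $w_4^{2^{t-2}-1}h$, the $w_4$-free part of $h$ is $g^{(3)}_{2^t}=w_2g^{(3)}_{2^t-2}$, which lies in $I_{2^t,3}$. So the obvious contribution to the top coefficient vanishes, and whatever survives comes only from the iterated substitutions. Your candidate witness $w_2^{2^{t-2}-1}w_3^{2^{t-1}-2}$ also has the wrong degree. It has degree $2^{t+1}-8$, while the coefficient of $w_4^{2^{t-2}-1}$ in a homogeneous expression for $w_4^{2^{t-1}-1}$ has degree $2^t$; these differ for $t\ge4$.

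The paper avoids the bookkeeping by choosing elements congruent to the relevant power of $w_4$ whose $w_4$-expansion is explicit:
\begin{itemize}
\item For $n\in\{2^t,2^t+1\}$: $w_4^{2^{t-1}-1}\equiv w_2^{2^{t-1}}g_{2^t-4}$ modulo $I_{2^t,4}$. Its coefficient of $w_4^{2^{t-2}-1}$ is $w_2^{2^{t-1}}$, which is not in $I_{2^t,3}$.
\item For $n=2^t-1$: $g_{2^t-4}^2=w_4^{2^{t-1}-2}+\sum_{i=0}^{t-3}w_4^{2^{i+1}-2}(g^{(3)}_{2^{t-i}-4})^{2^{i+1}}$. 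Its $i=t-3$ coefficient is again $w_2^{2^{t-1}}$.
\item For $n=2^t-2$: no analogue of Lemma~\ref{lem: Vuk} is available, because of the leading monomial $w_3w_4^{2^{t-2}-2}$. Instead one uses $g_{2^t-4}g_{2^{t-1}-4}^2=w_4^{2^{t-1}-3}+p$ with $\lm(p)=w_2^{2^{t-1}-2}w_4^{2^{t-2}-2}$. This is divisible by nothing in $\lm(F_{2^t-2})$.
\end{itemize}
Without these or equivalent explicit identities, your plan establishes none of the six required inequalities.
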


The rest of this section is devoted to proving Theorem \ref{htw_4}. We will separate the proof into three parts: $j=0$, $j=1$ and $j=2$. These parts are established in Propositions \ref{prop:ht(w4)}, \ref{prop: ht(w_4)_2^t-1} and \ref{prop:ht(w4)2^t-2} respectively. Due to (\ref{W_2^t=W_2^t+1}), the height of $\w_4$ in $H^*(\gr_{2^t+1,4})$ is the same as in $H^*(\gr_{2^t,4})$, and so the part $j=0$ (Proposition \ref{prop:ht(w4)}) includes this case as well. 

Let us first recall an equation from \cite[Lemma 3.2]{Rusin}: for $t\ge3$
\begin{equation}\label{eq: Rusin}
  g_{2^t-4} = \sum_{i= 0}^{t-2}w_4^{2^i-1}\big(g_{2^{t-i}-4}^{(3)}\big)^{2^i}.
\end{equation}
Note, however, that this equation holds for $t=2$ as well (in that case (\ref{eq: Rusin}) reduces to $1=1$). 

Another fact that will be used in the upcoming computations was proved in \cite[(4.7)]{CJP}:
\begin{equation}\label{w_3kvadrat}
    w_3g_r^2=g_{2r+3} \quad \mbox{ for all } r\ge-3.
\end{equation}

The next lemma expresses $w_4^{2^{t-1}-1}$ in terms of polynomials $g_r$, and we will use it for the proof of Theorem \ref{htw_4} for both $j=0$ and $j=1$.

\begin{lemma} \label{lem:rel-2^(t+1)}
For all integers $t\ge3$ one has
    \[w_4^{2^{t-1}-1}=g_{2^{t+1}-4}+w_2^{2^{t-1}}g_{2^t-4} + w_3^{2^{t-1}}g_{2^{t-1}-4}.\]
\end{lemma}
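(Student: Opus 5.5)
The plan is to deduce the identity from Rusin's formula (\ref{eq: Rusin}), applied three times (with $t+1$, $t$ and $t-1$ in place of $t$), together with the generalized recursion (\ref{eq_rec_g_gen}) for the case $k=3$. All three instances of (\ref{eq: Rusin}) are legitimate, because the formula holds for every exponent $\ge2$ (for exponent $2$ it reduces to $1=1$) and $t-1\ge2$.

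First I write (\ref{eq: Rusin}) with $t+1$ in place of $t$:
\[g_{2^{t+1}-4}=\sum_{i=0}^{t-1}w_4^{2^i-1}\big(g^{(3)}_{2^{t+1-i}-4}\big)^{2^i}.\]
The summand $i=t-1$ is $w_4^{2^{t-1}-1}\big(g_0^{(3)}\big)^{2^{t-1}}=w_4^{2^{t-1}-1}$. Hence it suffices to show that the remaining sum, over $0\le i\le t-2$, equals $w_2^{2^{t-1}}g_{2^t-4}+w_3^{2^{t-1}}g_{2^{t-1}-4}$ (we work mod $2$, so signs do not matter).

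The key auxiliary identity is the following: for every $m\ge2$,
\[g^{(3)}_{2^{m+1}-4}=w_2^{2^{m-1}}g^{(3)}_{2^m-4}+w_3^{2^{m-1}}g^{(3)}_{2^{m-1}-4}.\]
This is (\ref{eq_rec_g_gen}) with $k=3$, $s=m-1$ and $r=2^{m+1}-4$.
\begin{itemize}
\item The hypothesis $r\ge1+3(2^{m-1}-1)=3\cdot2^{m-1}-2$ holds for $m\ge2$.
\item The indices are correct: $r-2\cdot2^{m-1}=2^m-4$ and $r-3\cdot2^{m-1}=2^{m-1}-4$.
\item In the boundary case $m=2$ we get $g^{(3)}_{-2}=0$. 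This is consistent with the convention $g_r^{(k)}=0$ for $-k+1\le r\le-1$, under which the recursion holds for all $r\ge1$.
\end{itemize}

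Next I apply this identity with $m=t-i$ (so $m\ge2$ for $i\le t-2$) and raise it to the $2^i$-th power, which is additive in characteristic $2$. This gives
\[\big(g^{(3)}_{2^{t+1-i}-4}\big)^{2^i}=w_2^{2^{t-1}}\big(g^{(3)}_{2^{t-i}-4}\big)^{2^i}+w_3^{2^{t-1}}\big(g^{(3)}_{2^{t-1-i}-4}\big)^{2^i}.\]
I multiply by $w_4^{2^i-1}$ and sum over $0\le i\le t-2$, then handle the two resulting sums separately.
\begin{itemize}
\item The $w_2^{2^{t-1}}$ part is exactly $w_2^{2^{t-1}}g_{2^t-4}$, by (\ref{eq: Rusin}) for $t$.
\item In the $w_3^{2^{t-1}}$ part, the summand $i=t-2$ contains $g^{(3)}_{-2}=0$, so the sum effectively runs over $0\le i\le t-3$. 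It then equals $w_3^{2^{t-1}}g_{2^{t-1}-4}$, by (\ref{eq: Rusin}) for $t-1$.
\end{itemize}
Adding back the term $w_4^{2^{t-1}-1}$ gives the claimed identity.

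The main obstacle is only the bookkeeping at the boundary: checking that the recursion (\ref{eq_rec_g_gen}) is applicable for the smallest value $m=2$, and that the vanishing $g^{(3)}_{-2}=0$ matches the missing top term of Rusin's formula for $t-1$. Everything else is direct substitution.
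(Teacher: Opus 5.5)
Your proposal is correct and is essentially the paper's proof. The paper likewise expands all three $g$'s via (\ref{eq: Rusin}) (for $t+1$, $t$, $t-1$), isolates the $i=t-1$ term $w_4^{2^{t-1}-1}$, discards the vanishing $i=t-2$ summand using $g^{(3)}_{-2}=0$, and kills each remaining bracket with (\ref{eq_rec_g_gen}) for $k=3$, $r=2^{t+1-i}-4$, $s=t-1-i$. Applying the recursion before raising to the $2^i$-th power, as you do, is only a cosmetic reordering of that step.
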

\begin{proof}
    We use equation (\ref{eq: Rusin}) to calculate:
    \begin{align*}
       g_{2^{t+1}-4}&+w_2^{2^{t-1}}g_{2^t-4} + w_3^{2^{t-1}}g_{2^{t-1}-4} \\
       =&\sum_{i= 0}^{t-1}w_4^{2^i-1}\big(g_{2^{t+1-i}-4}^{(3)}\big)^{2^i}+w_2^{2^{t-1}}\!\sum_{i= 0}^{t-2}w_4^{2^i-1}\big(g_{2^{t-i}-4}^{(3)}\big)^{2^i}\\&+w_3^{2^{t-1}}\sum_{i= 0}^{t-3}w_4^{2^i-1}\big(g_{2^{t-1-i}-4}^{(3)}\big)^{2^i} \\
       =&\,w_4^{2^{t-1}-1}+\sum_{i= 0}^{t-2}w_4^{2^i-1}\big(g_{2^{t+1-i}-4}^{(3)}\big)^{2^i}+\sum_{i= 0}^{t-2}w_4^{2^i-1}\big(w_2^{2^{t-1-i}}g_{2^{t-i}-4}^{(3)}\big)^{2^i}\\
       &+\sum_{i= 0}^{t-2}w_4^{2^i-1}\big(w_3^{2^{t-1-i}}g_{2^{t-1-i}-4}^{(3)}\big)^{2^i} \\
       =&\,w_4^{2^{t-1}-1}+\sum_{i= 0}^{t-2}w_4^{2^i-1}\bigg(g_{2^{t+1-i}-4}^{(3)}+w_2^{2^{t-1-i}}g_{2^{t-i}-4}^{(3)}+w_3^{2^{t-1-i}}g_{2^{t-1-i}-4}^{(3)}\bigg)^{2^i}
    \end{align*}
    The sums in the third and the fifth row are equal, because the summand for $i=t-2$ vanishes (since $g_{-2}^{(3)}=0$). Finally, by (\ref{eq_rec_g_gen}) (applied to $k=3$, $r=2^{t+1-i}-4$ and $s=t-1-i$) the expression in the last brackets is zero, and we are done.
\end{proof}

\begin{proposition}\label{prop:ht(w4)}
    Let $t\ge3$ and $n\in\{2^t,2^t+1\}$. Then in $H^*(\gr_{n,4})$ we have 
    \[\htt(\w_4) = 2^{t-1}-1.\]
\end{proposition}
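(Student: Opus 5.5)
By (\ref{W_2^t=W_2^t+1}) it suffices to treat $n=2^t$. We have to show two things: $w_4^{2^{t-1}-1}\in I_{2^t,4}$, and $w_4^{2^{t-1}-2}\notin I_{2^t,4}$.

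For the vanishing we use Lemma \ref{lem:rel-2^(t+1)}:
\[w_4^{2^{t-1}-1}=g_{2^{t+1}-4}+w_2^{2^{t-1}}g_{2^t-4}+w_3^{2^{t-1}}g_{2^{t-1}-4}.\]
The first term lies in $I_{2^t,4}$ by (\ref{g_rinIn,k}), since $2^{t+1}-4\ge 2^t-3$. The second term does not obviously lie there, because $g_{2^t-4}\notin I_{2^t,4}$ in general. We therefore treat the last two terms together. By (\ref{eq_rec_g_gen}) with $k=4$, $s=t-1$ and $r=2^{t+1}-4$ (allowed, since $r\ge 1+4(2^{t-1}-1)=2^{t+1}-3$ fails by one), that formula is not directly applicable. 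Instead I would apply (\ref{eq_rec_g_gen}) with $s=t-2$ to a suitable index $r\ge 2^t-3$, for instance $r=2^{t+1}-4$. This gives
\[g_{2^{t+1}-4}=w_2^{2^{t-2}}g_{2^{t+1}-4-2^{t-1}}+w_3^{2^{t-2}}g_{2^{t+1}-4-3\cdot2^{t-2}}+w_4^{2^{t-2}}g_{2^t-4},\]
whose terms we then compare with the two remaining terms.

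A cleaner route is probably to reduce $w_4^{2^{t-1}-1}=w_4^{2^{t-2}}\cdot w_4^{2^{t-2}-1}$ modulo the Gröbner basis $F_{2^t}$. The leading monomial of $g_{2^t}$ is $w_4^{2^{t-2}}$, so we can replace $w_4^{2^{t-2}}$ by $g_{2^t}+w_4^{2^{t-2}}$, which is a polynomial of lower $w_4$-degree. Since everything is homogeneous of degree $2^{t+1}-4$, the question becomes whether the resulting expression, written as $\sum_j p_jw_4^j$ with $j<2^{t-2}$, has all $p_j\in I_{2^t,3}$. By Lemma \ref{lem: Vuk} that is exactly equivalent to membership in $I_{2^t,4}$.

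Concretely, I expect the identity of Lemma \ref{lem:rel-2^(t+1)} to combine with (\ref{eq: Rusin}) as follows. Write
\[w_2^{2^{t-1}}g_{2^t-4}+w_3^{2^{t-1}}g_{2^{t-1}-4}=\sum_i w_4^{2^i-1}\bigl(w_2^{2^{t-1-i}}g^{(3)}_{2^{t-i}-4}+w_3^{2^{t-1-i}}g^{(3)}_{2^{t-1-i}-4}\bigr)^{2^i}.\]
In each bracket, the $w_2$-degree is high enough that the bracket lies in $I_{2^t,3}$. Both $\htt(w_2)=2^t-4<2^{t-1+?}$ and the $2^i$-th powers of elements in $I_{2^{t-i},3}$ can be used, together with the fact that $I_{2^t,3}$ contains $2^i$-th powers of generators of $I_{2^{t-i},3}$ (a Frobenius argument via (\ref{eq_rec_g_gen})). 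After that, Lemma \ref{lem: Vuk} finishes the vanishing, with all powers $w_4^{2^i-1}$ for $i\le t-2$ below $2^{t-2}$. Verifying that each bracket lies in $I_{2^t,3}$ is the main obstacle. I would try to show $w_2^{2^{t-1-i}}g^{(3)}_{2^{t-i}-4}+w_3^{2^{t-1-i}}g^{(3)}_{2^{t-1-i}-4}=g^{(3)}_{2^{t+1-i}-4}$ modulo terms in $I$, by using (\ref{eq_rec_g_gen}) exactly as in the proof of Lemma \ref{lem:rel-2^(t+1)}. Then the bracket raised to the power $2^i$ is $\bigl(g^{(3)}_{2^{t+1-i}-4}\bigr)^{2^i}$, and this lies in $I_{2^t,3}$ by the Frobenius/(\ref{eq_rec_g_gen}) comparison of indices.

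For the nonvanishing, the monomial $w_4^{2^{t-1}-2}$ has $w_4$-degree larger than $2^{t-2}$, so we cannot simply read it off. Instead, I would compute the normal form by writing $w_4^{2^{t-1}-2}=w_4^{2^{t-2}-2}\cdot w_4^{2^{t-2}}$ and substituting $w_4^{2^{t-2}}\equiv g_{2^t}+w_4^{2^{t-2}}$, expanded via (\ref{eq: Rusin}) and (\ref{w_3kvadrat}). This should yield an expression $\sum_j p_jw_4^j$ in which some coefficient, ideally of the form $w_2^aw_3^b$ with $a<2^{t-2}$, is one of the basis monomials of $\F_2[w_2,w_3]/I_{2^t,3}$ coming from (\ref{LM(F)}), for example the top class $w_2^{2^{t-2}-1}w_3^{2^{t-1}-2}$ times an appropriate power of $w_4$. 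Lemma \ref{lem: Vuk} then gives $w_4^{2^{t-1}-2}\notin I_{2^t,4}$. An alternative is to exhibit a product $w_4^{2^{t-1}-2}\cdot m\neq0$ that lands in the top degree $4(2^t-4)$ using Poincaré duality; I would fall back on this if the direct reduction gets messy.
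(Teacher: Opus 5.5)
Your target is off by one. $\htt(\w_4)=2^{t-1}-1$ means $w_4^{2^{t-1}}\in I_{2^t,4}$ and $w_4^{2^{t-1}-1}\notin I_{2^t,4}$. You instead set out to prove $w_4^{2^{t-1}-1}\in I_{2^t,4}$ and $w_4^{2^{t-1}-2}\notin I_{2^t,4}$, i.e.\ height $2^{t-1}-2$. That is the answer for $n=2^t-1$ (Proposition \ref{prop: ht(w_4)_2^t-1}), not for $n=2^t$.

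Your vanishing claim is false, and your own sketch fails exactly at the step you flagged as the main obstacle. For $i=t-2$ the bracket equals $w_2^2$, so its $2^{t-2}$-th power is $w_2^{2^{t-1}}$. This does \emph{not} lie in $I_{2^t,3}$, because $\htt(\w_2)=2^t-4\ge 2^{t-1}$ in $H^*(\gr_{2^t,3})$, and no Frobenius argument can change that. Concretely, for $t=3$ one has $I_{8,4}=(w_2^3+w_3^2,\,w_2^2w_3,\,w_2^4+w_2w_3^2+w_2^2w_4+w_4^2)$. Hence $w_4^2\equiv w_2^2w_4$ and $w_4^3\equiv w_2^4w_4$, which is nonzero by Lemma \ref{lem: Vuk} since $w_2^4\notin I_{8,3}$. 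Consequently your nonvanishing statement $w_4^{2^{t-1}-2}\notin I_{2^t,4}$ is true but one power too weak, and the actual upper bound $w_4^{2^{t-1}}\in I_{2^t,4}$ is never addressed.

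Both correct halves are short.
\begin{itemize}
\item \textbf{Upper bound.} Apply (\ref{eq_rec_g_gen}) with $k=4$, $s=t-1$ to $r=2^{t+1}$ rather than $2^{t+1}-4$; this $r$ satisfies the hypothesis. It gives $w_4^{2^{t-1}}=g_{2^{t+1}}+w_2^{2^{t-1}}g_{2^t}+w_3^{2^{t-1}}g_{2^{t-1}}$. The right-hand side lies in $I_{2^t,4}$ by (\ref{g_rinIn,k}) together with $w_3^{2^{t-1}}\in I_{2^t,4}$ (Theorem \ref{prop:ht}).
\item \textbf{Lower bound.} The identity of Lemma \ref{lem:rel-2^(t+1)} you started from should be used for non-membership. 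Modulo $I_{2^t,4}$ it reduces $w_4^{2^{t-1}-1}$ to $w_2^{2^{t-1}}g_{2^t-4}$. By (\ref{eq: Rusin}) this equals $\sum_j p_jw_4^j$ with all $j\le 2^{t-2}-1$ and $p_{2^{t-2}-1}=w_2^{2^{t-1}}$. Lemma \ref{lem: Vuk} then gives $w_4^{2^{t-1}-1}\notin I_{2^t,4}$ precisely because $w_2^{2^{t-1}}\notin I_{2^t,3}$. This is the same term that defeats your vanishing argument, and it is how the paper argues.
\end{itemize}
Your initial reduction to $n=2^t$ via (\ref{W_2^t=W_2^t+1}) is fine.
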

\begin{proof}
We know that $I_{2^t,4}=I_{2^t+1,4}$ (see (\ref{I_2^t=I_2^t+1})), and so it is enough to prove that $w_4^{2^{t-1}}\in I_{2^t,4}$ and $w_4^{2^{t-1}-1}\notin I_{2^t,4}$.

The equation (\ref{eq_rec_g_gen}) for $k=4$, $r=2^{t+1}$ and $s=t-1$ is
\begin{equation}\label{eq:ht-w4-1}
g_{2^{t+1}} = w_2^{2^{t-1}}g_{2^t} + w_3^{2^{t-1}}g_{2^{t-1}} + w_4^{2^{t-1}}.\end{equation}
Recall that $g_r\in I_{n,4}$ for all $r\ge n-3$ (see (\ref{g_rinIn,k})), and therefore
 $g_{2^{t+1}}, g_{2^t}\in I_{2^t,4}$, while $w_3^{2^{t-1}}\in I_{2^t,4}$ since $\htt(\w_3) = 2^{t-1}-2$ (Theorem \ref{prop:ht}). Equation (\ref{eq:ht-w4-1}) now implies $w_4^{2^{t-1}}\in I_{2^t,4}$.

Using again the fact $w_3^{2^{t-1}}\in I_{2^t,4}$, as well as $g_{2^{t+1}-4}\in I_{2^t,4}$, from Lemma \ref{lem:rel-2^(t+1)} we conclude that $w_4^{2^{t-1}-1}\notin I_{2^t,4}$ if and only if $w_2^{2^{t-1}}g_{2^t-4}\notin I_{2^t,4}$. Since $\lm(g_{2^t-4}) = w_4^{2^{t-2}-1}$, by Lemma ~\ref{lem: Vuk} it suffices to prove $w_2^{2^{t-1}}\notin I_{2^t,3}$, which is true because $\htt(\w_2) = 2^t-4\ge2^{t-1}$ in $H^*(\gr_{2^t,3})$ (see \cite[Theorem 1.2]{CP-cl}). 
\end{proof}

So, we have verified Theorem \ref{htw_4} for $j=0$ (including the case $n=2^t+1$). We now move on to the case $j=1$. 

\begin{proposition}\label{prop: ht(w_4)_2^t-1}
    Let $t\ge4$. For $\w_4 \in H^*(\gr_{2^t-1,4})$ we have
    \[\htt(\w_4) = 2^{t-1}-2.\]
\end{proposition}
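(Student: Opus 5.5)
The plan is to prove the two inclusions $w_4^{2^{t-1}-1}\in I_{2^t-1,4}$ and $w_4^{2^{t-1}-2}\notin I_{2^t-1,4}$.

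\emph{Upper bound.} I would use Lemma \ref{lem:rel-2^(t+1)}:
\[w_4^{2^{t-1}-1}=g_{2^{t+1}-4}+w_2^{2^{t-1}}g_{2^t-4}+w_3^{2^{t-1}}g_{2^{t-1}-4}.\]
By (\ref{g_rinIn,k}), $g_r\in I_{2^t-1,4}$ for all $r\ge 2^t-4$. Hence the first two summands lie in $I_{2^t-1,4}$. For the third summand, Theorem \ref{prop:ht} gives $\htt(\w_3)=2^{t-1}-2$ in $H^*(\gr_{2^t-1,4})$, so $w_3^{2^{t-1}}\in I_{2^t-1,4}$. Together these give $w_4^{2^{t-1}-1}\in I_{2^t-1,4}$.

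\emph{Lower bound.} The goal is to rewrite $w_4^{2^{t-1}-2}$ modulo $I_{2^t-1,4}$ as a polynomial of $w_4$-degree at most $2^{t-2}-2$ with coefficients in $\F_2[w_2,w_3]$, and then apply Lemma \ref{lem: Vuk2}.

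First, apply (\ref{eq: Rusin}) to $g_{2^t-4}\in I_{2^t-1,4}$. Its $i=t-2$ summand is $w_4^{2^{t-2}-1}$, because $g_0^{(3)}=1$. So modulo $I_{2^t-1,4}$,
\[w_4^{2^{t-2}-1}\equiv\sum_{i=0}^{t-3}w_4^{2^i-1}\big(g^{(3)}_{2^{t-i}-4}\big)^{2^i}.\]
Squaring this congruence (we are in characteristic $2$) gives
\[w_4^{2^{t-1}-2}\equiv\sum_{i=0}^{t-3}w_4^{2^{i+1}-2}\big(g^{(3)}_{2^{t-i}-4}\big)^{2^{i+1}}.\]
The $w_4$-exponents $2^{i+1}-2$ are pairwise distinct and at most $2^{t-2}-2$, so Lemma \ref{lem: Vuk2} applies. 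It therefore suffices to find a single coefficient that is not in $I_{2^t,3}$.

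I would take $i=t-3$, which is admissible because $t\ge4$. The corresponding coefficient is $\big(g_4^{(3)}\big)^{2^{t-2}}$. From (\ref{eq_explicitly_g}), $g_4^{(3)}=w_2^2$, so this coefficient equals $w_2^{2^{t-1}}$. This monomial is not in $I_{2^t,3}$, since $\htt(\w_2)=2^t-4\ge 2^{t-1}$ in $H^*(\gr_{2^t,3})$ (\cite[Theorem 1.2]{CP-cl}, as already used in Proposition \ref{prop:ht(w4)}). Hence $w_4^{2^{t-1}-2}\notin I_{2^t-1,4}$.

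The main obstacle is ensuring the squaring step produces exponents that fit within the range allowed by Lemma \ref{lem: Vuk2}. Since the maximal exponent is exactly $2^{t-2}-2$, this works, and the remaining steps are routine checks.
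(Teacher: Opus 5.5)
Your proposal is correct and follows the paper's proof essentially step for step. The upper bound comes from Lemma \ref{lem:rel-2^(t+1)} together with $g_r\in I_{2^t-1,4}$ for $r\ge 2^t-4$ and $w_3^{2^{t-1}}\in I_{2^t-1,4}$; the lower bound comes from squaring (\ref{eq: Rusin}), applying Lemma \ref{lem: Vuk2}, and using the $i=t-3$ coefficient $w_2^{2^{t-1}}\notin I_{2^t,3}$. The only cosmetic difference is that you square a congruence modulo $I_{2^t-1,4}$, whereas the paper writes out the polynomial identity for $(g_{2^t-4})^2$; these amount to the same thing.
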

\begin{proof}
  We need to check that $w_4^{2^{t-1}-1}\in I_{2^t-1,4}$ and $w_4^{2^{t-1}-2}\notin I_{2^t-1,4}$. The first assertion follows from Lemma \ref{lem:rel-2^(t+1)}, since $g_{2^{t+1}-4},g_{2^t-4},w_3^{2^{t-1}}\in I_{2^t-1,4}$ (by (\ref{g_rinIn,k}) and Theorem \ref{prop:ht}).

  For the second one, we use (\ref{eq: Rusin}):
  \[\big(g_{2^t-4}\big)^2=\sum_{i= 0}^{t-2}w_4^{2^{i+1}-2}\big(g_{2^{t-i}-4}^{(3)}\big)^{2^{i+1}}=w_4^{2^{t-1}-2}+\sum_{i= 0}^{t-3}w_4^{2^{i+1}-2}\big(g_{2^{t-i}-4}^{(3)}\big)^{2^{i+1}}.\]
  Using the fact $g_{2^t-4}\in I_{2^t-1,4}$ again, we conclude that the claim $w_4^{2^{t-1}-2}\notin I_{2^t-1,4}$ is equivalent to the assertion that the last sum does not belong to $I_{2^t-1,4}$. By Lemma \ref{lem: Vuk2} it suffices to find $i\in\{0,1,\ldots,t-3\}$ with the property $\big(g_{2^{t-i}-4}^{(3)}\big)^{2^{i+1}}\notin I_{2^t,3}$. For $i=t-3$ this polynomial is $\big(g_4^{(3)}\big)^{2^{t-2}}=w_2^{2^{t-1}}$ (since $g_4^{(3)}=w_2^2$), and the fact $w_2^{2^{t-1}}\notin I_{2^t,3}$ was established in the proof of the previous proposition.
\end{proof}
  
To complete the proof of Theorem \ref{htw_4} we are left to prove it for $j=2$. In that case Theorem \ref{htw_4} comes down to $w_4^{2^{t-1}-2}\in I_{2^t-2,4}$ and $w_4^{2^{t-1}-3}\notin I_{2^t-2,4}$. In the previous proof, in order to show $w_4^{2^{t-1}-1}\in I_{2^t-1,4}$, we used a convenient representation of $w_4^{2^{t-1}-1}$, given in Lemma \ref{lem:rel-2^(t+1)}. It would be helpful to obtain something similar for $w_4^{2^{t-1}-2}$. 

\begin{lemma}\label{lem: uzas1}
    Let $t \ge 4$. In $\F_2[w_2,w_3,w_4]$ we have the following identity:
    \[w_4^{2^{t-1}-2} = \alpha_tg_{2^t-2}+ (g_{2^t-4})^2 + \beta_tg_{2^t-5}  + w_3^{2^{t-1}-1}  g_{2^{t-1} - 5},\] 
    for some polynomials $\alpha_t,\beta_t\in\F_2[w_2,w_3,w_4]$.
\end{lemma}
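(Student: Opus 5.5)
Our plan is to start from the squared form of (\ref{eq: Rusin}) that already appeared in the proof of Proposition \ref{prop: ht(w_4)_2^t-1}:
\[(g_{2^t-4})^2=w_4^{2^{t-1}-2}+\sum_{i=0}^{t-3}w_4^{2^{i+1}-2}\big(g_{2^{t-i}-4}^{(3)}\big)^{2^{i+1}}.\]
The identity we want is then equivalent to
\[S_t:=\sum_{i=0}^{t-3}w_4^{2^{i+1}-2}\big(g_{2^{t-i}-4}^{(3)}\big)^{2^{i+1}}+w_3^{2^{t-1}-1}g_{2^{t-1}-5}\in(g_{2^t-2},g_{2^t-5}),\]
with the coefficients $\alpha_t,\beta_t$ read off from an explicit combination.

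The tool for producing such a combination is (\ref{w_3kvadrat}), $w_3g_r^2=g_{2r+3}$. Iterating it gives $w_3^{2^m-1}g_r^{2^m}=g_{2^mr+3(2^m-1)}$. We would then express the pure-$w_2,w_3$ terms $\big(g^{(3)}_{2^{t-i}-4}\big)^{2^{i+1}}$ via $g^{(4)}$'s by substituting (\ref{eq: Rusin}) at lower levels $t-i$ (recursively), and use (\ref{eq_rec_g_gen}) with suitable $s$ to write $g_{2^t-2}$ and $g_{2^t-5}$ as $\sum_j w_j^{2^s}g_{\cdot}$. For instance, (\ref{eq_rec_g_gen}) with $r=2^t-2$, $s=t-2$ gives
\[g_{2^t-2}=w_2^{2^{t-2}}g_{2^{t-1}-2}+w_3^{2^{t-2}}g_{2^{t-2}-2}+w_4^{2^{t-2}}g_{-2},\]
with $g_{-2}=0$. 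Similar expansions of $g_{2^t-5}$ should produce the factor $w_3^{2^{t-1}-1}g_{2^{t-1}-5}$, since $w_3^{2^{t-1}-1}g_{2^{t-1}-5}$ is closely related to $g_{2^t-5}$-type terms through the squaring relation.

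Concretely, we would try to prove the identity by induction on $t$. For the base case $t=4$ we verify it directly: $w_4^6=\alpha_4g_{14}+g_{12}^2+\beta_4g_{11}+w_3^7g_3$, which is a finite computation. For the inductive step we square the identity for $t$ (squaring is additive over $\F_2$). This gives $w_4^{2^t-4}=\alpha_t^2g_{2^t-2}^2+g_{2^t-4}^4+\beta_t^2g_{2^t-5}^2+w_3^{2^t-2}g_{2^{t-1}-5}^2$. We still need $w_4^{2^t-2}=w_4^2\cdot w_4^{2^t-4}$, so we would rather seek a recursion in the form of Lemma \ref{lem:rel-2^(t+1)}: write $g_{2^{t+1}-2}$, $g_{2^{t+1}-5}$ and $g_{2^{t+1}-4}^2$ via (\ref{eq_rec_g_gen}) with $s=t-1$ in terms of level-$t$ quantities multiplied by $2^{t-1}$-th powers of $w_2,w_3,w_4$. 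The $w_4^{2^{t-1}}$ coefficients then link back to the level-$t$ identity, and the $w_3$-terms are absorbed using (\ref{w_3kvadrat}), e.g.\ $w_3g_{2^{t-1}-5}^2=g_{2^t-7}$.

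The main obstacle is bookkeeping: making sure that every leftover term is a multiple of $g_{2^t-2}$ or $g_{2^t-5}$ rather than of $g_{2^t-3}$ or $g_{2^t-4}$. The term in $g_{2^t-3}$ is harmless because $g_{2^t-3}=0$ by (\ref{eq:g=0}). Leftover multiples of $g_{2^t-4}$ would instead need to be rewritten using $g_{2^t-2}=w_2g_{2^t-4}+w_3g_{2^t-5}+w_4g_{2^t-6}$. If the direct induction becomes unwieldy, we would fall back on an explicit computation modulo $(g_{2^t-2},g_{2^t-5})$ of the residual sum $S_t$ term by term, using (\ref{eq: Rusin}) for each $i$.
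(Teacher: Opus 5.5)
The reformulation at the start is correct: by the squared form of (\ref{eq: Rusin}), the lemma is exactly the statement $S_t\in(g_{2^t-2},g_{2^t-5})$. But there is a genuine gap after that: nothing beyond this point is an argument, and the one concrete mechanism you propose breaks down. Squaring the level-$t$ identity gives $w_4^{2^t-4}$, not $w_4^{2^t-2}$, as you notice yourself. The suggested repair is to expand $g_{2^{t+1}-4}$ and $g_{2^{t+1}-5}$ by (\ref{eq_rec_g_gen}) with $s=t-1$. That lies outside the range of the formula, which requires $r\ge 1+4(2^{t-1}-1)=2^{t+1}-3$. This is not a technicality. The correct versions of those expansions are Lemma \ref{lem:rel-2^(t+1)} and the identity $w_3w_4^{2^{t-1}-2}=g_{2^{t+1}-5}+w_2^{2^{t-1}}g_{2^t-5}+w_3^{2^{t-1}}g_{2^{t-1}-5}$. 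Their correction terms are precisely the powers of $w_4$ you are trying to control. You also give no way to pass between the ideals $(g_{2^t-2},g_{2^t-5})$ and $(g_{2^{t+1}-2},g_{2^{t+1}-5})$. The base case is not computed, and the ``fallback'' term-by-term computation is not carried out.

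The missing idea is the step that actually produces multiples of $g_{2^t-2}$ and $g_{2^t-5}$. First, $w_2^{2^{t-1}}+w_2g_{2^t-2}$ is divisible by $w_3$. This holds because $w_2^{2^{t-1}}$ is the only monomial of $g^{(3)}_{2^t}$ free of $w_3$, and $g^{(3)}_{2^t}=w_2g^{(3)}_{2^t-2}=w_2g_{2^t-2}$ by (\ref{eq_rec_g}), (\ref{eq:g=0}) and (\ref{3=4}). Second, $w_3(g_{2^{t-1}-4})^2=g_{2^t-5}$ by (\ref{w_3kvadrat}).

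The paper reaches a point where these apply as follows. It multiplies Lemma \ref{lem:rel-2^(t+1)} by $w_3$ and rewrites $w_3g_{r-3}=g_r+w_4g_{r-4}$ for $r=2^{t+1}-1,2^t-1,2^{t-1}-1$ (here $g_{r-2}=0$). It then kills the $g_r$-terms via (\ref{3=4}) and the $k=3$ case of (\ref{eq_rec_g_gen}), and cancels first $w_4$ and then $w_3$.

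Your own starting point could in fact be finished directly, with no induction:
\begin{itemize}
\item[(i)] By (\ref{w_3kvadrat}), $w_3^{2^{t-1}-1}g_{2^{t-1}-5}=w_3^{2^{t-1}}(g_{2^{t-2}-4})^2$.
\item[(ii)] Expand $(g_{2^{t-1}-4})^2$ and $(g_{2^{t-2}-4})^2$ by (\ref{eq: Rusin}).
\item[(iii)] The $k=3$ case of (\ref{eq_rec_g_gen}), with $r=2^{t-i}-4$ and $s=t-2-i$, collapses everything to $S_t=w_2^{2^{t-1}}(g_{2^{t-1}-4})^2$.
\item[(iv)] The divisibility fact then gives $S_t=w_2(g_{2^{t-1}-4})^2g_{2^t-2}+\beta_tg_{2^t-5}$, with the same $\alpha_t,\beta_t$ as in the paper.
\end{itemize}
Without this divisibility step, or an equivalent one, your proposal has no route into the required ideal.
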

\begin{proof}
If we multiply the equation from Lemma \ref{lem:rel-2^(t+1)} by $w_3$, and if we apply (\ref{eq_rec_g}) (for $k=4$ and $r = 2^{t+1}-1, 2^t-1, 2^{t-1} -1$), we obtain
\begin{align*}
  w_3w_4^{2^{t-1}-1}=& \,w_3g_{2^{t+1}-4}+w_2^{2^{t-1}}w_3g_{2^t-4} + w_3^{2^{t-1}}w_3g_{2^{t-1}-4}\\
  =&\,w_4g_{2^{t+1}-5}+g_{2^{t+1}-1}+w_2^{2^{t-1}}(w_4g_{2^t-5}+g_{2^t-1}) \\
  &+ w_3^{2^{t-1}}(w_4g_{2^{t-1}-5}+g_{2^{t-1}-1})\\
  =&\,w_4\big(g_{2^{t+1}-5}+w_2^{2^{t-1}}g_{2^t-5} + w_3^{2^{t-1}}g_{2^{t-1}-5}\big)\\
  &+g_{2^{t+1}-1}+w_2^{2^{t-1}}g_{2^t-1} + w_3^{2^{t-1}}g_{2^{t-1}-1}.
  \end{align*}
  According to (\ref{3=4}), 
  \[g_{2^{t+1}-1}+w_2^{2^{t-1}}g_{2^t-1} + w_3^{2^{t-1}}g_{2^{t-1}-1}=g_{2^{t+1}-1}^{(3)}+w_2^{2^{t-1}}g_{2^t-1}^{(3)} + w_3^{2^{t-1}}g_{2^{t-1}-1}^{(3)},\] 
  and this vanishes by (\ref{eq_rec_g_gen}) (applied to $k=3$, $r=2^{t+1}-1$ and $s=t-1$). Therefore
  \[w_3w_4^{2^{t-1}-1}=w_4\big(g_{2^{t+1}-5}+w_2^{2^{t-1}}g_{2^t-5} + w_3^{2^{t-1}}g_{2^{t-1}-5}\big),\]
  and canceling out $w_4$ we get
  \begin{equation}\label{eqq}
     w_3w_4^{2^{t-1}-2}=g_{2^{t+1}-5}+w_2^{2^{t-1}}g_{2^t-5} + w_3^{2^{t-1}}g_{2^{t-1}-5}. 
  \end{equation}
Now, to obtain the desired equation, we wish to cancel out $w_3$, and for that purpose we write each of the summands on the right-hand side as a convenient multiple of $w_3$.
By (\ref{w_3kvadrat}), $g_{2^{t+1}-5}=w_3(g_{2^t-4})^2$ and $g_{2^t-5}=w_3(g_{2^{t-1}-4})^2$. The monomial $w_2^{2^{t-1}}$ is the leading monomial of $g_{2^t}^{(3)}$, and all other monomials of $g_{2^t}^{(3)}$ are divisible by $w_3$ (this is straightforward from (\ref{eq_explicitly_g})). Therefore, $w_2^{2^{t-1}}+g_{2^t}^{(3)}=w_3\beta_t$ for some polynomial $\beta_t\in\F_2[w_2,w_3]\subset\F_2[w_2,w_3,w_4]$. On the other hand, $g_{2^t}^{(3)}=w_2g_{2^t-2}^{(3)}=w_2g_{2^t-2}$ ((\ref{eq_rec_g}), (\ref{eq:g=0}) and (\ref{3=4})). Therefore
\[w_2^{2^{t-1}}g_{2^t-5}=\big(w_2^{2^{t-1}}+g_{2^t}^{(3)}+w_2g_{2^t-2}\big)g_{2^t-5}=w_3\beta_tg_{2^t-5}+w_2g_{2^t-2}g_{2^t-5}.\]
By collecting all these facts, from (\ref{eqq}) we obtain:
\[w_3w_4^{2^{t-1}-2}=w_3(g_{2^t-4})^2+w_3\beta_tg_{2^t-5}+w_2g_{2^t-2}w_3(g_{2^{t-1}-4})^2 + w_3^{2^{t-1}}g_{2^{t-1}-5}.\]
Finally, canceling out $w_3$ and defining $\alpha_t:=w_2(g_{2^{t-1}-4})^2$ finishes the proof.
\end{proof}

\begin{proposition}\label{prop:ht(w4)2^t-2}
     Let $t \ge 4$. For $\w_4 \in H^*(\gr_{2^t-2,4})$ we have
    \[\htt(\w_4) = 2^{t-1}-3.\]
\end{proposition}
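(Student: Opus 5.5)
The proof has two parts. We show that $w_4^{2^{t-1}-2}\in I_{2^t-2,4}$ using Lemma \ref{lem: uzas1}. We then show that $w_4^{2^{t-1}-3}\notin I_{2^t-2,4}$ by passing to a quotient in which $w_3$ is killed.

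\emph{Upper bound.} Here $n=2^t-2$, so $n-3=2^t-5$. By (\ref{g_rinIn,k}), $g_r\in I_{2^t-2,4}$ for all $r\ge 2^t-5$. In particular $g_{2^t-2}$, $g_{2^t-4}$ and $g_{2^t-5}$ all lie in $I_{2^t-2,4}$, which handles the summands $\alpha_tg_{2^t-2}$, $(g_{2^t-4})^2$ and $\beta_tg_{2^t-5}$ of Lemma \ref{lem: uzas1}. For the last summand, Theorem \ref{prop:ht} gives $\htt(\w_3)=2^{t-1}-2$ for $n=2^t-2$, so $w_3^{2^{t-1}-1}\in I_{2^t-2,4}$. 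Hence $w_3^{2^{t-1}-1}g_{2^{t-1}-5}\in I_{2^t-2,4}$, and Lemma \ref{lem: uzas1} yields $w_4^{2^{t-1}-2}\in I_{2^t-2,4}$.

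\emph{Lower bound.} Reducing $w_4^{2^{t-1}-3}$ directly modulo $F_{2^t-2}$ looks messy. Its $w_4$-exponent exceeds $2^{t-2}-1$, so it must be reduced repeatedly, and the leading monomial $w_3w_4^{2^{t-2}-2}$ interferes. Instead we use the following sufficient criterion. Consider the epimorphism $\F_2[w_2,w_3,w_4]\to\F_2[w_2,w_4]$ given by $w_3\mapsto 0$. If $w_4^{2^{t-1}-3}$ lay in $I_{2^t-2,4}$, its image would lie in the image ideal $J$ of $I_{2^t-2,4}$. So it suffices to show $w_4^{2^{t-1}-3}\notin J$.

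To identify $J$, let $\overline g_r$ be the image of $g_r$. By (\ref{power_series}), the $\overline g_r$ are the homogeneous parts of $(1+w_2+w_4)^{-1}$. Since $w_2$ and $w_4$ both have even degree, $\overline g_r=0$ for odd $r$. Therefore
\[J=\big(\overline g_{2^t-4},\overline g_{2^t-2}\big).\]
Halve all degrees and write $m=2^{t-1}-1$. Then $\overline g_{2^t-4}$ and $\overline g_{2^t-2}$ become the complete symmetric polynomials $\overline g_{m-1}$ and $\overline g_m$ in two "Chern roots". This is exactly the standard presentation of $H^*(G_2(\mathbb C^{m});\F_2)$, with $w_2\mapsto c_1$ and $w_4\mapsto c_2$ of the tautological bundle. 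Thus $\F_2[w_2,w_4]/J\cong H^*(G_2(\mathbb C^m);\F_2)$ with $w_4$ corresponding to $c_2$.

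In this cohomology ring, $c_2$ of the tautological subbundle is, up to sign, the Schubert class $\sigma_{1,1}$. Geometrically, $\sigma_{1,1}$ is the class of the lines in $\mathbb P^{m-1}$ contained in a hyperplane. Intersecting $m-2$ general hyperplanes leaves a $\mathbb P^1$, so $\sigma_{1,1}^{m-2}$ is the point class, that is, the integral generator of top degree. Reducing mod $2$, $c_2^{m-2}\neq0$. Since $m-2=2^{t-1}-3$, we get $w_4^{2^{t-1}-3}\notin J$, and therefore $w_4^{2^{t-1}-3}\notin I_{2^t-2,4}$.

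The main obstacle is this identification, specifically checking that the image ideal is generated exactly by $\overline g_{2^t-4}$ and $\overline g_{2^t-2}$. One also needs to confirm that the top-degree count works: $2(m-2)$ is the complex dimension of $G_2(\mathbb C^m)$ in halved units. If one prefers to avoid Schubert calculus, the fallback is a Gröbner basis argument in $\F_2[w_2,w_4]$ for $J$. With lex order $w_4>w_2$, one would check that $w_4^{m-2}$ survives reduction.
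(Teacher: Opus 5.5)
Your proof is correct. The upper bound is exactly the paper's argument: Lemma \ref{lem: uzas1}, together with $g_r\in I_{2^t-2,4}$ for $r\ge 2^t-5$ and $w_3^{2^{t-1}-1}\in I_{2^t-2,4}$.

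Your lower bound, however, takes a genuinely different route. The paper stays inside the Gr\"obner framework. From (\ref{eq: Rusin}) it writes $g_{2^t-4}(g_{2^{t-1}-4})^2=w_4^{2^{t-1}-3}+p$, where $\lm(p)=w_2^{2^{t-1}-2}w_4^{2^{t-2}-2}$. No element of $\lm(F_{2^t-2})$ divides this monomial, so Rusin's Gr\"obner basis gives $p\notin I_{2^t-2,4}$.

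You instead kill $w_3$. The image of $I_{2^t-2,4}$ is $(\overline g_{2^t-4},\overline g_{2^t-2})$, because the image of an ideal under a surjection is generated by the images of its generators and the odd-degree generators vanish. So the ``main obstacle'' you flag is already settled by your own argument. The quotient is then the mod $2$ reduction of the torsion-free integral presentation of $H^*(G_2(\mathbb C^m))$ with $m=2^{t-1}-1$. In it, $c_2^{m-2}=\sigma_{1,1}^{m-2}=\sigma_{m-2,m-2}$ is the point class, hence nonzero mod $2$.

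What each route buys:
\begin{itemize}
\item Your argument avoids Gr\"obner bases and (\ref{eq: Rusin}) for the lower bound, at the price of importing Schubert calculus.
\item It is the algebraic shadow of the realification map $G_2(\mathbb C^{\lfloor n/2\rfloor})\to\gr_{n,4}$, which pulls $\w_2,\w_3,\w_4$ back to $c_1,0,c_2$ mod $2$.
\item The same computation therefore gives $\htt(\w_4)\ge\lfloor n/2\rfloor-2$ for every $n$. That bound is sharp at $n=2^t-2$, but one short of the true value for $n\in\{2^t-1,2^t,2^t+1\}$.
\item So your method cannot replace the paper's arguments in those other cases, while the paper's approach handles all four values of $n$ uniformly.
\end{itemize}
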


\begin{proof}
Since $g_r\in I_{2^t-2,4}$ for all $r\ge2^t-5$ (by (\ref{g_rinIn,k})) and $w_3^{2^{t-1}-1}\in I_{2^t-2,4}$ (because $\htt(\w_3)=2^{t-1}-2$, by Theorem \ref{prop:ht}), Lemma \ref{lem: uzas1} proves that $w_4^{2^{t-1}-2}\in I_{2^t-2,4}$. We are left to check that $w_4^{2^{t-1}-3}\notin I_{2^t-2,4}$.

By (\ref{eq: Rusin}) we have
\begin{align*}
  g_{2^t-4}(g_{2^{t-1}-4})^2 = &\sum_{i= 0}^{t-2}w_4^{2^i-1}\big(g_{2^{t-i}-4} ^{(3)}\big)^{2^i}\cdot \sum_{i= 0}^{t-3}w_4^{2^{i+1}-2}\big(g_{2^{t-1-i}-4} ^{(3)}\big)^{2^{i+1}}\\
  =&\sum_{i= 0}^{t-2}w_4^{2^i-1}\big(g_{2^{t-i}-4} ^{(3)}\big)^{2^i}\cdot \sum_{i= 1}^{t-2}w_4^{2^i-2}\big(g_{2^{t-i}-4}^{(3)}\big)^{2^i}\\
  =&\,\bigg(w_4^{2^{t-2}-1}+g_{2^t-4}^{(3)}+\sum_{i= 1}^{t-3}w_4^{2^i-1}\big(g_{2^{t-i}-4}^{(3)}\big)^{2^i}\bigg)\\
  &\cdot\bigg(w_4^{2^{t-2}-2}+\sum_{i= 1}^{t-3}w_4^{2^i-2}\big(g_{2^{t-i}-4}^{(3)}\big)^{2^i}\bigg)
\end{align*}
If $S$ denotes the sum $\sum_{i= 1}^{t-3}w_4^{2^i-2}\big(g_{2^{t-i}-4}^{(3)}\big)^{2^i}$, then
\begin{align*}
  g_{2^t-4}(g_{2^{t-1}-4})^2 = &\,\big(w_4^{2^{t-2}-1}+g_{2^t-4}^{(3)}+w_4S\big)\big(w_4^{2^{t-2}-2}+S\big)\\
  = &\,w_4^{2^{t-1}-3}+w_4^{2^{t-2}-2}g_{2^t-4}^{(3)}+g_{2^t-4}^{(3)}S+w_4S^2.
\end{align*}
Let $p=w_4^{2^{t-2}-2}g_{2^t-4}^{(3)}+g_{2^t-4}^{(3)}S+w_4S^2$, so that $g_{2^t-4}(g_{2^{t-1}-4})^2 =w_4^{2^{t-1}-3}+p$. Since $g_{2^t-4}\in I_{2^t-2,4}$, one has $w_4^{2^{t-1}-3}\notin I_{2^t-2,4}$ if and only if $p\notin I_{2^t-2,4}$. The exponent of $w_4$ in a monomial from $g_{2^t-4}^{(3)}S+w_4S^2$ is at most $2^{t-2}-3$, and so 
\[\lm(p)=w_4^{2^{t-2}-2}\cdot\lm\big(g_{2^t-4}^{(3)}\big)=w_4^{2^{t-2}-2}w_2^{2^{t-1}-2}.\]
However, for the Gr\"obner basis $F_{2^t-2}$ of $I_{2^t-2,4}$ we know that
\[\lm(F_{2^t-2})=\big\{w_2^{2^{t-1}-2^i}w_3^{2^i-1}\mid 0\le i\le t-1\big\}\cup\big\{w_4^{2^{t-2}-1},w_3w_4^{2^{t-2}-2}\big\}\]
(see (\ref{LM(F)})ff), so $\lm(p)$ is not divisible by any of the leading monomials from $F_{2^t-2}$, i.e., $p$ cannot be reduced modulo $F_{2^t-2}$. We conclude $p\notin I_{2^t-2,4}$, and the proof is complete.
\end{proof}

\section{Cup-length of $\gr_{2^t+1,4}$}
\label{sec:cl}
\label{sec:4}

The main result of \cite{Rusin} states that for all integers $t\ge4$ and $j\in\{0,1,2\}$ one has 
\[\cl(\gr_{2^t-j,4})=2^t+2^{t-2}-4-j.\]
(It is not hard to see that the proof of the result for $j=0$ actually goes through for $t=3$ as well, so that $\cl(\gr_{8,4})=6$.)

We are going to extend this result and prove that $\cl(\gr_{2^t+1,4})=2^t+2^{t-2}-4$ for all $t\ge3$. Actually, a part of the proof is already contained in \cite{Rusin}. Namely, by definition all elements of $W_{2^t,4}$ are polynomials in $\w_2$, $\w_3$ and $\w_4$, and so $\cl(W_{2^t,4})$ is reached by some monomial $\w_2^b\w_3^c\w_4^d$. Therefore, Lemmas 4.1 and 4.4 from \cite{Rusin} imply $\cl(W_{2^t,4})=2^t+2^{t-2}-5$, and since
 $W_{2^t+1,4}\cong W_{2^t,4}$ (see (\ref{W_2^t=W_2^t+1})), we have
 \begin{equation}\label{clW}
     \cl(W_{2^t+1,4})=2^t+2^{t-2}-5.
 \end{equation}
 
 In transition from the subalgebra $W_{2^t+1,4}$ to the algebra $H^*(\gr_{2^t+1,4})$, a useful result is obtained in \cite[Theorem 6.6]{bane-marko}. In the case we are interested in (by the notation used in \cite{bane-marko}, it is the case $n+4=2^t+1$) it states that the so-called characteristic rank of the vector bundle $\widetilde\gamma_{2^t+1,4}$ is equal to $2^t-1$. The characteristic rank of $\widetilde\gamma_{2^t+1,4}$ is defined as the maximal integer $q$ such that $H^j(\gr_{2^t+1,4})\subseteq W_{2^t+1,4}$ for all $j\in\{0,1,\ldots,q\}$. We conclude that for a homogeneous class $y\in H^*(\gr_{2^t+1,4})$ the following implication holds
 \begin{equation}\label{charrank}
     y\notin W_{2^t+1,4} \quad \Longrightarrow \quad |y|\ge2^t.
 \end{equation}

We are now ready to prove the announced result.
\begin{theorem}\label{cor:cl(G2^t+1,4)}
    Let $t\ges 3$. Then
    \[\cl(\gr_{2^t+1,4}) = 2^t+2^{t-2} - 4.\]
\end{theorem}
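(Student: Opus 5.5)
Write $N=2^t+1$, $d=\dim\gr_{N,4}=4(N-4)=2^{t+2}-12$ and $c=2^t+2^{t-2}-5=\cl(W_{N,4})$ (by (\ref{clW})). We prove the lower bound $\cl\ge c+1$ and the upper bound $\cl\le c+1$ separately.

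\emph{Lower bound.} Take a monomial $\w_2^b\w_3^c\w_4^e\neq0$ of length $c$ in $W_{N,4}$; it exists by (\ref{clW}). Since $\gr_{N,4}$ is a closed manifold, Poincaré duality over $\F_2$ gives a class $z$ of degree $d-|\w_2^b\w_3^c\w_4^e|$ with nonzero product. If this complementary degree is positive, appending $z$ gives a nonzero product of length $c+1$. So it suffices that the degree of that monomial is $<d$. Every element of $W_{N,4}$ has degree at most $2b+3c+4e$ for some nonzero monomial. Using the heights from Theorems \ref{prop:ht} and \ref{htw_4}, or Rusin's explicit maximizing monomial, we check that this degree is far below $d$: roughly $2^{t+1}$ versus $2^{t+2}$. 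A cleaner route avoids locating the monomial: if the top class of $H^*(\gr_{N,4})$ lay in $W$, then $\cl(W)$ could be realised in degree $d$, so a degree comparison using the explicit Gröbner basis bounds the top degree of $W_{N,4}$. Either way, $\cl\ge c+1=2^t+2^{t-2}-4$.

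\emph{Upper bound.} Suppose $y_1\cdots y_m\neq0$ with homogeneous $y_i$ of positive degree; we show $m\le c+1$. Each $y_i$ decomposes as an element of $W$ plus classes not in $W$. Expanding the product, some product of homogeneous pieces is nonzero. In that product, let $k$ be the number of factors not in $W$; by (\ref{charrank}) each has degree $\ge 2^t$.
\begin{itemize}
\item If $k=0$, the whole product lies in $W_{N,4}$, so $m\le c$.
\item If $k\ge1$, the product of the $W$-factors, of which there are $m-k$, is nonzero in $W$, so $m-k\le c$. The total degree is at most $d$, so $k\cdot2^t\le d<2^{t+2}$, giving $k\le3$. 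This crude bound only yields $m\le c+3$.
\end{itemize}
We need the sharper statement: a nonzero product of $W$-elements of length $\ell$, times classes of total degree at least $k2^t$, forces $\ell+k\le c+1$. The natural tool is a degree–length tradeoff in $W$: show that any nonzero product of $\ell$ positive-degree elements of $W$ with $\ell>c-k+1$ has degree $>d-k2^t$. Equivalently, long products in $W$ must have large degree. Since every generator has degree $\ge2$, a product of length $\ell$ has degree at least $2\ell$. With $\ell=c-k+2$ the condition becomes $2(c-k+2)+k2^t>d$. For $k=1$ this reads $2(2^t+2^{t-2}-4)+2^t=3\cdot2^t+2^{t-1}-8$, which is less than $2^{t+2}-12$ for large $t$, so the naive bound fails.

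The key step, and the main obstacle, is therefore to refine this using the height structure. In long nonzero monomials $\w_2^b\w_3^c\w_4^e$, the exponent $b$ is bounded by $\htt(\w_2)=2^t-4$, so the remaining factors must be $\w_3$ or $\w_4$. This raises the average degree per factor. The plan is to prove, via the Gröbner basis $F_{2^t}$ and Lemma \ref{lem: Vuk}, that a nonzero monomial of length $\ell$ has degree at least $2\ell+3(\ell-(2^t-4))$ or a similar bound. Combining this bound with the degree constraint $k\,2^t$ from (\ref{charrank}) should give $\ell+k\le c+1$ for $k=1,2,3$. I would first try to import Rusin's analysis of maximal monomials in Lemmas 4.1 and 4.4 of \cite{Rusin} to obtain this degree bound. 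For $t=3$, where $\dim=20$, I would check the cases directly.
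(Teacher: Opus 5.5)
Your lower bound is essentially the paper's, but note that the heights alone would not give it: they only bound a nonzero monomial's degree by $2(2^t-4)+3(2^{t-1}-2)+4(2^{t-1}-1)=2^{t+2}+3\cdot2^{t-1}-18$, which exceeds $\dim\gr_{2^t+1,4}$. The clean choice is the paper's explicit monomial $\w_2^{2^t-4}\w_4^{2^{t-2}-1}$. It has length $c$, is nonzero by Lemma \ref{lem: Vuk} because $w_2^{2^t-4}\notin I_{2^t,3}$, and has degree $3\cdot2^t-12<4\cdot2^t-12$.

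The genuine gap is the upper bound. You never complete it, and the ``key step'' you leave open (a refined degree--length tradeoff imported from Rusin's analysis) is unnecessary. The error is your treatment of $k=1$. In that case the $W$-subproduct is nonzero and has length $m-1$, so (\ref{clW}) gives $m-1\le c$, i.e.\ $m\le c+1$, which is exactly the claim. Your degree test for $k=1$ concerned nonzero products of $c+1$ elements of $W$; no such products exist, so its failure says nothing.

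For $k\ge2$, the crude estimate you set aside already works. If $m\ge c+2$, the total degree is at least
\[
2(m-k)+k\cdot2^t\ \ge\ 2(c+2-k)+k\cdot2^t\ =\ 2c+4+k(2^t-2)\ \ge\ 2c+2^{t+1}\ =\ 2^{t+2}+2^{t-1}-10\ >\ 2^{t+2}-12,
\]
which exceeds the dimension, a contradiction. This is precisely the paper's proof: $s\le c$ together with $s+m\ge c+2$ forces $m\ge2$ factors outside $W$, and then $|x_i|\ge2$, $|y_j|\ge2^t$ push the degree past $\dim\gr_{2^t+1,4}$.

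With this argument, no separate check of $t=3$ is needed. Your decomposition of each $y_i$ into $W$- and non-$W$-parts is also superfluous: a homogeneous class either lies in $W_{2^t+1,4}$ or, by (\ref{charrank}), has degree at least $2^t$.
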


\begin{proof}
We know that $\htt(\w_2) = 2^t-4$ in $H^*(\gr_{2^t,3})$ (see \cite[Theorem 1.2]{CP-cl}), which means that $w_2^{2^t-4}\notin I_{2^t,3}$. By Lemma \ref{lem: Vuk}, $w_2^{2^t-4}w_4^{2^{t-2}-1}\notin I_{2^t,4}=I_{2^t+1,4}$, that is, $\w_2^{2^t-4}\w_4^{2^{t-2}-1}\neq0$ in $H^*(\gr_{2^t+1,4})$. The degree of this class is $2^{t+1}-8+2^t-4=3\cdot2^t-12$, while the dimension of the manifold $\gr_{2^t+1,4}$ is $4\cdot2^t-12$. By Poincar\'e duality, there is a class $y\in H^{2^t}(\gr_{2^t+1,4})$ with $\w_2^{2^t-4}\w_4^{2^{t-2}-1}y\neq0$. We conclude that $\cl(\gr_{2^t+1,4})\ge2^t+2^{t-2} - 4$.

We are left to prove $\cl(\gr_{2^t+1,4})\le2^t+2^{t-2} - 4$. Suppose to the contrary that there exists a nonzero product $x_1\cdots x_sy_1\cdots y_m$ of classes of positive degree such that $s+m\ge2^t+2^{t-2}-3$, where $x_1,\ldots,x_s\in W_{2^t+1,4}$ and $y_1,\ldots,y_m\in H^*(\gr_{2^t+1,4})\setminus W_{2^t+1,4}$. Since $\gr_{2^t+1,4}$ is simply connected, $|x_i|\ge2$ for all $i\in\{1,\ldots,s\}$, and according to (\ref{charrank}), $|y_i|\ge2^t$ for all $i\in\{1,\ldots,m\}$. Thus for the total degree $d$ of the class $x_1\cdots x_sy_1\cdots y_m$ we have
\[d\ge2s+2^tm\ge2(2^t+2^{t-2}-3-m)+2^tm=(2^t-2)m+2\cdot2^t+2^{t-1}-6.\]
By (\ref{clW}), $s\le2^t+2^{t-2}-5$, and so $s+m\ge2^t+2^{t-2}-3$ implies $m\ge2$. We conclude
\[d\ge2\cdot(2^t-2)+2\cdot2^t+2^{t-1}-6=4\cdot2^t+2^{t-1}-10>4\cdot2^t-12=\dim(\gr_{2^t+1,4}),\]
and consequently $x_1\cdots x_sy_1\cdots y_m=0$. This contradiction finishes the proof.
\end{proof}

From (\ref{eq:cat>1+cup}) we now get a lower bound for the Lusternik--Schnirelmann category of $\gr_{2^t+1,4}$.
\begin{corollary} For all integers $t\ge 3$ one has
    \[\cat(\gr_{2^t+1,4})\ge2^t+2^{t-2} - 3.\]
\end{corollary}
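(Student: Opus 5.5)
This corollary follows directly from Theorem \ref{cor:cl(G2^t+1,4)} and inequality (\ref{eq:cat>1+cup}). The plan is to apply (\ref{eq:cat>1+cup}) with $R=\F_2$ and $X=\gr_{2^t+1,4}$, which gives $\cl_{\F_2}(\gr_{2^t+1,4})<\cat(\gr_{2^t+1,4})$. Recall that $\cl(X)$ denotes $\cl_{\F_2}(X)$ in the rest of the paper. Substituting the value computed in Theorem \ref{cor:cl(G2^t+1,4)} gives
\[
2^t+2^{t-2}-4<\cat(\gr_{2^t+1,4}).
\]
Both sides are integers, so the strict inequality is equivalent to $\cat(\gr_{2^t+1,4})\ge 2^t+2^{t-2}-3$, which is the claim for every $t\ge3$.

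The only thing to check is that the conventions match. The paper uses the unreduced (normalized) Lusternik--Schnirelmann category, in which $\cat(S^{n-1})=2$. With this convention, a nonzero product of $m$ positive-degree classes forces $\cat\ge m+1$, so the strict inequality (\ref{eq:cat>1+cup}) is the correct one to use. It is also exactly the form stated in the introduction.

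There is no real obstacle here. All the substantive work is in Theorem \ref{cor:cl(G2^t+1,4)}: the lower bound uses Lemma \ref{lem: Vuk} together with Poincaré duality, and the upper bound uses the characteristic rank of $\widetilde\gamma_{2^t+1,4}$ and (\ref{clW}).
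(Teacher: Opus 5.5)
Your proposal is correct and is exactly the paper's argument: apply inequality (\ref{eq:cat>1+cup}) with $R=\F_2$ to the value $\cl(\gr_{2^t+1,4})=2^t+2^{t-2}-4$ from Theorem~\ref{cor:cl(G2^t+1,4)}, then use integrality. Only the lower-bound half of that theorem is needed here, namely the nonzero product $\w_2^{2^t-4}\w_4^{2^{t-2}-1}y$; the upper bound plays no role in this corollary.
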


\section{Zero-divisor cup-length of $\gr_{n,k}$}
\label{sec:5}

This section is devoted to finding lower bounds for topological complexity of Grassmannians $\gr_{n,4}$ by studying zero-divisor cup-length of $H^*(\gr_{n,4})$ and its subalgebra $W_{n,4}$. 

Let us first note that for any $k\ge2$ the sequence $\zcl(W_{n,k})$ increases with $n$:
\begin{equation}\label{zcl(W_{n,k}) increases}
   \zcl(W_{n,k})\le\zcl(W_{n+1,k}).
\end{equation}
This was proved in \cite[Lemma 3.3]{CPR} for $k=3$, but virtually the same proof goes through for arbitrary $k$ (the essence of the proof is the fact $I_{n,k}\supseteq I_{n+1,k}$).

We now establish a connection between $\zcl(W_{n,k})$ and $\zcl(\gr_{n,k})$. Note first that, since we are working over a field, $W_{n,k}\subset H^*(\gr_{n,k})$ implies $W_{n,k}\otimes W_{n,k}\subset H^*(\gr_{n,k})\otimes H^*(\gr_{n,k})$. This means that 
every zero-divisor in $W_{n,k}\otimes W_{n,k}$ is a zero-divisor in $H^*(\gr_{n,k})\otimes H^*(\gr_{n,k})$ as well (see the following diagram).
\[\begin{tikzcd}[row sep = 1cm, column sep = 0.7cm]
    H^*(\gr_{n,k})\otimes H^*(\gr_{n,k})
    \arrow{r}{\cdot}
    &
    H^*(\gr_{n,k})\\
    W_{n,k}\otimes W_{n,k}
    \arrow[hook]{u}
    \arrow{r}{\cdot}
    &
    W_{n,k}
    \arrow[hook]{u}
\end{tikzcd}\]
A consequence of this remark is the inequality $\zcl(\gr_{n,k})\ge\zcl(W_{n,k})$. However, we can prove more than that.
\begin{proposition}\label{prop:zcl(Gr)>=1+zcl(W)}
    Let $n\ge2k\ge4$. Then
    \[\zcl(\gr_{n,k}) \ge 1 + \zcl(W_{n,k}).\]
\end{proposition}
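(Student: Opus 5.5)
Let $m=\zcl(W_{n,k})$, realized by homogeneous zero-divisors $z_1,\ldots,z_m\in\ker(W_{n,k}\otimes W_{n,k}\to W_{n,k})$ with $P=z_1\cdots z_m\neq0$ in $W_{n,k}\otimes W_{n,k}$, hence also in $H^*(\gr_{n,k})\otimes H^*(\gr_{n,k})$ (we work over a field). The plan is to find one more zero-divisor $z$ with $zP\neq0$. The natural candidate is $z=z(y)=1\otimes y-y\otimes1$ for a class $y\in H^*(\gr_{n,k})$ lying outside $W_{n,k}$.

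First I make sure such a class exists. If $H^*(\gr_{n,k})=W_{n,k}$, then $W_{n,k}$ would satisfy Poincaré duality with top class in degree $k(n-k)$. For $n\ge2k\ge4$ this contradicts known facts: $\gr_{n,k}$ is a closed simply connected manifold and its top class is not in the image of $p^*$. Indeed, $p$ has degree $0$ mod $2$ as a double cover, so $p^*$ kills $H^{k(n-k)}(G_{n,k})$. Hence the top class $u\in H^{k(n-k)}(\gr_{n,k})$ is not in $W_{n,k}$. This is the key structural input.

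Next I work with an additive basis. Choose a homogeneous basis $\{b_\lambda\}$ of $H^*=H^*(\gr_{n,k})$ extending a basis of $W=W_{n,k}$, with the top class $u$ one of the new basis elements. Write
\[P=\sum c_{\lambda\mu}\,b_\lambda\otimes b_\mu,\]
with all $b_\lambda,b_\mu\in W$. I want to choose $y$ so that $(1\otimes y-y\otimes1)P\neq0$. The simplest choice is $y=u$ itself. Then $(1\otimes u)P=\sum c_{\lambda\mu}b_\lambda\otimes b_\mu u$, and $b_\mu u=0$ unless $b_\mu=1$. Thus $(1\otimes u)P=\big(\sum_\lambda c_{\lambda,1}b_\lambda\big)\otimes u$, and symmetrically $(u\otimes1)P=u\otimes\big(\sum_\mu c_{1,\mu}b_\mu\big)$.

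The main obstacle is that these components may vanish. Since $P$ lies in the ideal of zero-divisors, its bidegree-$(*,0)$ component $\sum_\lambda c_{\lambda,1}b_\lambda\otimes1$ may well be zero, and a product of $m\ge1$ zero-divisors often has no such component. So $y=u$ may fail. The fix is to pass to a non-extreme component. Pick a bihomogeneous component $a\otimes b\neq0$ of $P$ with $|b|$ minimal, and choose $y\in H^*\setminus W$ with $by\neq0$ in $H^*$. Such a $y$ exists by Poincaré duality: there is $y'$ with $by'=u$, and if $y'\in W$ we would get $u\in W$, a contradiction. Take $y=y'$, of degree $k(n-k)-|b|$.

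It remains to show $(1\otimes y)P$ and $(y\otimes1)P$ do not cancel. Decompose $H^*=W\oplus V$ with $V$ spanned by the new basis vectors, and project onto $W\otimes V$. The term $(y\otimes1)P$ has left factors in $yW$. By the minimality of $|b|$, the components of $(1\otimes y)P$ in bidegree $(|a|,k(n-k))$ come exactly from components $a'\otimes b$ with $|b'|=|b|$, giving $\big(\sum a'\big)\otimes u$ up to the duality pairing. Pairing with a suitable functional in the second factor makes this nonzero. Meanwhile, $(y\otimes1)P$ contributes to bidegree $(|a|,k(n-k))$ only via terms $a''\otimes b''$ with $|b''|=k(n-k)$, that is, multiples of $u$, which cannot occur in $P\in W\otimes W$. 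Hence $z(y)P\neq0$, giving $\zcl(\gr_{n,k})\ge m+1$.

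The delicate step is this bidegree bookkeeping, namely choosing $y$ as a Poincaré-dual partner of the minimal-degree right factors of $P$ so that the top-degree right slot isolates a nonzero term.
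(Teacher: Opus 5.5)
Your strategy matches the paper's up to swapping the tensor factors. You multiply $P$ by $z(y)$ for a Poincar\'e-dual class $y$ and isolate the bidegree in which one factor reaches $k(n-k)$. You then use $W_{n,k}\cap H^{k(n-k)}(\gr_{n,k})=0$ to kill the contribution of $(y\otimes1)P$ there. Your mod-$2$ degree argument for the double cover $p$ is a correct, self-contained proof of that vanishing, which the paper instead cites from Korbas.

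The gap is in the choice of $y$. Let $\alpha$ be the component of $P$ in bidegree $(|a|,|b|)$, and write $\alpha=\sum_l a_l\otimes b_l$ with $\{b_l\}$ a basis of $H^{|b|}(\gr_{n,k})$. If $b_ly=\lambda_lu$, then the component of $z(y)P$ in bidegree $(|a|,k(n-k))$ is $\big(\sum_l\lambda_la_l\big)\otimes u$. Your condition $by=u$ for the one chosen $b$ fixes a single $\lambda_l$ and says nothing about the others, so cancellation is not excluded. For example, if $\alpha=a\otimes b+a\otimes b'$ and your $y$ happens to satisfy $by=b'y=u$, this component is $0$.

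Your remark that ``pairing with a suitable functional in the second factor makes this nonzero'' does not rescue this. After multiplication the second factor lies in the one-dimensional space $H^{k(n-k)}(\gr_{n,k})$, so once $\sum_l\lambda_la_l=0$ there is nothing left to detect. The freedom has to be used in choosing $y$ itself.

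The fix is exactly the paper's step, transposed. Choose $l_0$ with $a_{l_0}\neq0$. Let $\varphi\colon H^{|b|}(\gr_{n,k})\to H^{k(n-k)}(\gr_{n,k})$ be the coordinate map $b_l\mapsto\delta_{l,l_0}u$. Poincar\'e duality realizes $\varphi$ as multiplication by some class $y$, and then the isolated component is $a_{l_0}\otimes u\neq0$.

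With this choice, neither the minimality of $|b|$ nor the requirement $y\notin W_{n,k}$ is needed. Positivity of $|y|=k(n-k)-|b|$ follows from $|b|<k(n-k)$. That holds because $\alpha\neq0$ forces $W_{n,k}\cap H^{|b|}(\gr_{n,k})\neq0$, while $W_{n,k}$ vanishes in degree $k(n-k)$.
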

\begin{proof}
 Let $z_1,z_2,\ldots,z_s\in W_{n,k}\otimes W_{n,k}$ be homogeneous zero-divisors of positive degree such that $z_1z_2\cdots z_s\ne0$ and $s=\zcl(W_{n,k})$, i.e., $\zcl(W_{n,k})$ is reached by the product $z_1z_2\cdots z_s$, which we denote by $z$. Let $m$ be the degree of $z$ (i.e., the sum of degrees of $z_1,z_2,\ldots,z_s$). So $z$ is an element of degree $m$ in $H^*(\gr_{n,k})\otimes H^*(\gr_{n,k})$, which means that
    \[z\in\bigoplus_{i = 0}^m H^i(\gr_{n,k}) \otimes H^{m-i}(\gr_{n,k}).\]
Let us write $z$ in the form $\sum_{i=0}^m\alpha_i$, where $\alpha_i\in H^i(\gr_{n,k}) \otimes H^{m-i}(\gr_{n,k})$. The dimension of the manifold $\gr_{n,k}$ is $k(n-k)$, and so $H^i(\gr_{n,k})=0$ for $i>k(n-k)$. Furthermore, note that $\alpha_i$ in fact belongs to $\big(H^i(\gr_{n,k})\cap W_{n,k}\big) \otimes \big(H^{m-i}(\gr_{n,k})\cap W_{n,k}\big)$ (because $z\in W_{n,k}\otimes W_{n,k}$), and it was proved in \cite[p.\ 1171]{Korbas} that $H^{k(n-k)}(\gr_{n,k})\cap W_{n,k}=0$. Therefore, the following implication holds:
\begin{equation}\label{alfai=0}
    i\ge k(n-k) \quad \Longrightarrow \quad \alpha_i=0.
\end{equation}

Since $z\ne0$ there exists $j\in\{0,1,\dots,m\}$ such that $\alpha_j\ne 0$ ((\ref{alfai=0}) implies $j<k(n-k)$). 

Next, let $\{b_1,b_2,\dots,b_r\}$ be a vector space basis for $H^j(\gr_{n,k})$. We can now write $\alpha_j$ in the form $\sum_{l=1}^r b_l\otimes v_l$, for some $v_l\in H^{m-j}(\gr_{n,k})$. Since $\alpha_j\ne0$, there exists $l_0\in\{1,\dots,r\}$ with the property $b_{l_0}\otimes v_{l_0}\ne0$. If we define a map $\varphi:H^j(\gr_{n,k})\rightarrow H^{k(n-k)}(\gr_{n,k})$ on the basis elements by
\[\varphi(b_l) = \begin{cases}
    c, & l = l_0\\
    0, & l \ne l_0
\end{cases},\]
where $c\in H^{k(n-k)}(\gr_{n,k})$ is the generator, then Poincar\'e duality gives us a class $a\in H^{k(n-k)-j}(\gr_{n,k})$ such that $\varphi$ is the multiplication with $a$. So for $1\le l \le r$, we have $ab_l \ne 0$ if and only if $l = l_0$. 

We now claim that $z(a)\cdot z_1z_2\cdots z_s=(1\otimes a+a\otimes1)\cdot z\ne0$ in $H^*(\gr_{n,k})\otimes H^*(\gr_{n,k})$, which will prove the proposition. The degree of this element is $k(n-k)-j+m$, and we have
\[(1\otimes a+a\otimes1)\cdot z=(1\otimes a+a\otimes1)\sum_{i = 0}^m\alpha_i =\sum_{i = 0}^m (1\otimes a)\alpha_i+\sum_{i = 0}^m (a\otimes 1)\alpha_i.\]
To verify that this is nonzero it will be enough to prove that the summand in $H^{k(n-k)}(\gr_{n,k})\otimes H^{m-j}(\gr_{n,k})$ is nonzero. However, the said summand is equal to $(a\otimes 1)\alpha_j$ (if $m\ge k(n-k)$, then this summand contains $(1\otimes a)\alpha_{k(n-k)}$ as well, but $\alpha_{k(n-k)}=0$ due to (\ref{alfai=0})), and we have
\[(a\otimes 1)\alpha_j= (a\otimes 1)\sum_{l=1}^r b_l\otimes v_l = \sum_{l=1}^r ab_l\otimes v_l = ab_{l_0}\otimes v_{l_0}\ne 0\]
(since we are working over a field, the fact $ab_{l_0}\otimes v_{l_0}\ne 0$ follows from $ab_{l_0}\ne0$ and $v_{l_0}\ne0$).
\end{proof}

Our aim now is to find a lower bound for $\zcl(W_{2^t-2,4})$, which will also be a lower bound for $\zcl(W_{n,4})$ for all $n\ge2^t-2$ (by (\ref{zcl(W_{n,k}) increases})). Together with Proposition \ref{prop:zcl(Gr)>=1+zcl(W)} and (\ref{eq:TC>1+zcl}), this will produce a lower bound for $\tc(\gr_{n,4})$. For that purpose we now make some preparatory remarks.

\medskip

We will need an algebra morphism $\psi:W_{2^t-1,3}\arr W_{2^t-2,4}$ (for $t\ge4$) that takes Stiefel--Whitney classes to corresponding Stiefel--Whitney classes, i.e., such that $\psi(\w_2)=\w_2$ and $\psi(\w_3)=\w_3$. Note that if such a morphism exists, then it is unique, because $W_{2^t-1,3}$ is generated by $\w_2$ and $\w_3$. Here is how it can be constructed.
The kernel of the composition of the inclusion $\F_2[w_2,w_3]\subset\F_2[w_2,w_3,w_4]$ with the natural projection $\F_2[w_2,w_3,w_4]\arr\F_2[w_2,w_3,w_4]/I_{2^t-2,4}$ is $I_{2^t-2,4}\cap\F[w_2,w_3]$. By (\ref{I_2^t=I_2^t-1}), Lemma \ref{utapanje} and (\ref{In,k opada}), $I_{2^t-1,3}=I_{2^t,3}=I_{2^t,4}\cap\F[w_2,w_3]\subseteq I_{2^t-2,4}\cap\F[w_2,w_3]$, and so this composition induces the algebra morphism $\Psi$ presented in the following diagram.
\[
\begin{tikzcd}[row sep = 1cm, column sep = 1cm]
    \F_2[w_2,w_3]
    \arrow[hook]{r}
    \arrow[twoheadrightarrow]{d}
    &
    \F_2[w_2,w_3,w_4]
    \arrow[twoheadrightarrow]{d}
    \\
    \F_2[w_2,w_3]/I_{2^t-1,3}
    \arrow[dashed]{r}{\Psi}
    &
    \F_2[w_2,w_3,w_4]/I_{2^t-2,4}
\end{tikzcd}
\]
Having in mind the isomorphism (\ref{eq:imp*2}) we obtain the morphism $\psi:W_{2^t-1,3}\arr W_{2^t-2,4}$ with the desired property.

Now let $B$ be the vector space basis for $W_{2^t-1,3}$ obtained from the Gr\"obner basis $F$ for $I_{2^t-1,3}=I_{2^t,3}$ using Theorem \ref{additive basis} and the isomorphism (\ref{eq:imp*2}). So $B$ consists of the cohomology classes $\w_2^b\w_3^c\in W_{2^t-1,3}$ such that the corresponding monomial $w_2^bw_3^c$ is not divisible by any of the monomials from $\lm(F)$. Let $B_{2^t-2}$ be the vector space basis for $W_{2^t-2,4}$ obtained in the same way from the Gr\"obner basis $F_{2^t-2}$ for $I_{2^t-2,4}$. Since $\lm(F_{2^t-2})=\lm(F)\cup\big\{w_4^{2^{t-2}-1},w_3w_4^{2^{t-2}-2}\big\}$ and $\psi(\w_2^b\w_3^c)=\w_2^b\w_3^c$, we see that for any $\sigma\in B$ and any $d\in\{0,1,\ldots,2^{t-2}-3\}$ one has 
\begin{equation}\label{bazni}
\psi(\sigma)\w_4^d\in B_{2^t-2}.
\end{equation}
Moreover, for $\sigma_1,\sigma_2\in B$ and $d_1,d_2\in\{0,1,\ldots,2^{t-2}-3\}$,
\begin{equation}\label{razliciti_bazni}
    (\sigma_1,d_1)\ne(\sigma_2,d_2) \quad \Longrightarrow \quad \psi(\sigma_1)\w_4^{d_1}\ne\psi(\sigma_2)\w_4^{d_2}.
\end{equation}

In the next lemma we prove the crucial fact for obtaining the announced lower bound for $\zcl(W_{2^t-2,4})$.
\begin{lemma}\label{psi_tenzor_psi}
  Let $t\ge5$ be an integer and $z\in W_{2^t-1,3}\otimes W_{2^t-1,3}$ an arbitrary element. If $z\ne0$ in $W_{2^t-1,3}\otimes W_{2^t-1,3}$, then 
  \[(\psi\otimes\psi)(z)\cdot\sum_{d=2}^{2^{t-2}-3}\w_4^d\otimes\w_4^{2^{t-2}-1-d}\ne0 \quad \mbox{ in } W_{2^t-2,4}\otimes W_{2^t-2,4}.\]
\end{lemma}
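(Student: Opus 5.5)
Expand $z$ in the basis $B\otimes B$ of $W_{2^t-1,3}\otimes W_{2^t-1,3}$, i.e. $z=\sum_{(\sigma,\tau)\in\Lambda}\sigma\otimes\tau$ for some nonempty finite set $\Lambda\subseteq B\times B$. Then
\[(\psi\otimes\psi)(z)\cdot\sum_{d=2}^{2^{t-2}-3}\w_4^d\otimes\w_4^{2^{t-2}-1-d}=\sum_{(\sigma,\tau)\in\Lambda}\ \sum_{d=2}^{2^{t-2}-3}\psi(\sigma)\w_4^d\otimes\psi(\tau)\w_4^{2^{t-2}-1-d}.\]
For $2\le d\le 2^{t-2}-3$, both exponents $d$ and $2^{t-2}-1-d$ lie in $\{2,\ldots,2^{t-2}-3\}\subseteq\{0,1,\ldots,2^{t-2}-3\}$. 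The range is nonempty exactly because $t\ge5$, since then $2^{t-2}-3\ge5\ge2$.

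By (\ref{bazni}), each factor $\psi(\sigma)\w_4^d$ and $\psi(\tau)\w_4^{2^{t-2}-1-d}$ belongs to the basis $B_{2^t-2}$ of $W_{2^t-2,4}$. Hence each summand is an element of the basis $B_{2^t-2}\otimes B_{2^t-2}$ of $W_{2^t-2,4}\otimes W_{2^t-2,4}$.

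It remains to check that these basis elements are pairwise distinct, indexed by the triples $(\sigma,\tau,d)\in\Lambda\times\{2,\ldots,2^{t-2}-3\}$. Over a field, a sum of distinct basis elements is nonzero, so this finishes the proof. Suppose two triples $(\sigma_1,\tau_1,d_1)$ and $(\sigma_2,\tau_2,d_2)$ give the same element. Equality of simple tensors of basis vectors forces equality of the left factors and of the right factors:
\[\psi(\sigma_1)\w_4^{d_1}=\psi(\sigma_2)\w_4^{d_2}\quad\text{and}\quad \psi(\tau_1)\w_4^{2^{t-2}-1-d_1}=\psi(\tau_2)\w_4^{2^{t-2}-1-d_2}.\]
By (\ref{razliciti_bazni}), the first equality gives $\sigma_1=\sigma_2$ and $d_1=d_2$, and the second gives $\tau_1=\tau_2$. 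So the triples coincide.

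The only delicate point is that every exponent of $\w_4$ that occurs must stay at most $2^{t-2}-3$, which is what (\ref{bazni}) and (\ref{razliciti_bazni}) require. This is exactly why the sum starts at $d=2$ rather than $d=0$ or $1$. With that range the argument is essentially a bookkeeping check, and no cancellation can occur.
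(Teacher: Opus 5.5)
Your proof is correct and is the paper's argument: you expand $z$ in the basis $\{\sigma\otimes\tau\mid\sigma,\tau\in B\}$, distribute the product, and use (\ref{bazni}) and (\ref{razliciti_bazni}) to conclude that the result is a nonempty sum of distinct basis elements of $W_{2^t-2,4}\otimes W_{2^t-2,4}$. You spell out two points the paper leaves implicit: why the triples $(\sigma,\tau,d)$ give pairwise distinct tensors, and why the range $2\le d\le 2^{t-2}-3$ is nonempty and keeps both exponents of $\w_4$ at most $2^{t-2}-3$.
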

\begin{proof}
    Since $B$ is a basis for $W_{2^t-1,3}$, the set $\{\sigma\otimes\tau\mid\sigma,\tau\in B\}$ is a basis for $W_{2^t-1,3}\otimes W_{2^t-1,3}$. Similarly, simple tensors of elements from $B_{2^t-2}$ constitute a basis for $W_{2^t-2,4}\otimes W_{2^t-2,4}$. Write $z$ as a nonempty sum $\sum_{i=1}^r\sigma_i\otimes\tau_i$ of distinct basis elements. Then
    \begin{align*}
      (\psi\otimes\psi)(z)\cdot\sum_{d=2}^{2^{t-2}-3}\w_4^d\otimes\w_4^{2^{t-2}-1-d}&=\sum_{i=1}^r\psi(\sigma_i)\otimes\psi(\tau_i)\cdot\sum_{d=2}^{2^{t-2}-3}\w_4^d\otimes\w_4^{2^{t-2}-1-d}\\
      &=\sum_{i=1}^r\sum_{d=2}^{2^{t-2}-3}\psi(\sigma_i)\w_4^d\otimes\psi(\tau_i)\w_4^{2^{t-2}-1-d},
    \end{align*}
    and according to (\ref{bazni}) and (\ref{razliciti_bazni}), this is a nonempty sum of distinct basis elements in $W_{2^t-2,4}\otimes W_{2^t-2,4}$. 
\end{proof}

Let us now indicate the idea of constructing a "long" nontrivial product of zero-divisors in $W_{2^t-2,4}\otimes W_{2^t-2,4}$ (i.e., obtaining a lower bound for $\zcl(W_{2^t-2,4})$). We will take $z\in W_{2^t-1,3}\otimes W_{2^t-1,3}$ to be a product of zero-divisors that realizes $\zcl(W_{2^t-1,3})$ (known from \cite{CPR}), then we will note that $(\psi\otimes\psi)(z)$ is the product of corresponding zero-divisors in $W_{2^t-2,4}\otimes W_{2^t-2,4}$, multiply it with $z(\w_4)^{2^{t-2}-1}$, and prove that the obtained product is nonzero. The zero-divisor $z(\w_4)^{2^{t-2}-1}$ is equal to the sum $\sum_{d=0}^{2^{t-2}-1}\w_4^d\otimes\w_4^{2^{t-2}-1-d}$, and after applying Lemma \ref{psi_tenzor_psi}, to prove the claim it will suffice to verify that for $d\in\{0,1,2^{t-2}-2,2^{t-2}-1\}$ one has $(\psi\otimes\psi)(z)\cdot(\w_4^d\otimes\w_4^{2^{t-2}-1-d})=0$. The identities from the following lemma will play the crucial role in that verification.

\begin{lemma}\label{identiteti:2^t-2}
    For all integers $t\ge4$ the following equalities hold in $W_{2^t-2,4}$:
    \begin{itemize}
        \item[(a)] $\w_2^{2^{t-2}}\w_3^{2^{t-2}-1}=0$, 
        \item[(b)] $\w_3^{2^{t-2}-1}\w_4^{2^{t-2}-2}=0$, 
        \item[(c)] $\w_2^{2^{t-1} + 2^{t-2}
        }\w_4^{2^{t-2}-2} = 0$.
    \end{itemize}
\end{lemma}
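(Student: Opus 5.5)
The plan is to prove each identity by exhibiting the corresponding polynomial as an element of $I_{2^t-2,4}$. Throughout I will use two facts: $g_r\in I_{2^t-2,4}$ for all $r\ge 2^t-5$ (by (\ref{g_rinIn,k})), and $I_{2^t,3}\subseteq I_{2^t,4}\subseteq I_{2^t-2,4}$ (by Lemma \ref{utapanje} and (\ref{In,k opada})).

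For (a) I would show that the monomial is itself one of the polynomials $g_r$. Iterating (\ref{w_3kvadrat}) from $g_0=1$ gives, by induction on $j$, that $g_{3\cdot 2^j-3}=w_3^{2^j-1}$. Next apply (\ref{eq_rec_g_gen}) with $s=t-2$ and $r=5\cdot2^{t-2}-3$; this $r$ satisfies $r\ge 1+4(2^{t-2}-1)$. It yields
\[g_{5\cdot 2^{t-2}-3}=w_2^{2^{t-2}}g_{3\cdot2^{t-2}-3}+w_3^{2^{t-2}}g_{2^{t-2}-3}+w_4^{2^{t-2}}g_{-3}.\]
Here $g_{2^{t-2}-3}=0$ by (\ref{eq:g=0}) and $g_{-3}=0$ by convention. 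Hence $w_2^{2^{t-2}}w_3^{2^{t-2}-1}=g_{5\cdot2^{t-2}-3}$. Since $5\cdot2^{t-2}-3\ge 2^t-5$, this polynomial lies in $I_{2^t-2,4}$.

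For (b) I would start from $g_{2^t-5}=w_3\,(g_{2^{t-1}-4})^2\in I_{2^t-2,4}$, which is (\ref{w_3kvadrat}). Squaring (\ref{eq: Rusin}) with $t-1$ in place of $t$ gives
\[(g_{2^{t-1}-4})^2=w_4^{2^{t-2}-2}+\sum_{i=0}^{t-4}w_4^{2^{i+1}-2}\big(g^{(3)}_{2^{t-1-i}-4}\big)^{2^{i+1}}.\]
Multiplying $g_{2^t-5}$ by $w_3^{2^{t-2}-2}$ therefore shows that $w_3^{2^{t-2}-1}w_4^{2^{t-2}-2}$ is congruent modulo $I_{2^t-2,4}$ to
\[\sum_{i=0}^{t-4} w_4^{2^{i+1}-2}\,w_3^{2^{t-2}-1}\big(g^{(3)}_{2^{t-1-i}-4}\big)^{2^{i+1}}.\]
It then suffices to show that each coefficient $w_3^{2^{t-2}-1}\big(g^{(3)}_{2^{t-1-i}-4}\big)^{2^{i+1}}$ lies in $I_{2^t,3}$. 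For this I would reuse the device from (a), namely that $w_3^{2^{t-2}-1}=g^{(3)}_{3\cdot2^{t-2}-3}$, together with (\ref{eq_rec_g_gen}) for $k=3$ and a suitable $s$. The goal is to rewrite the product as a combination of $g^{(3)}_r$ with $r\ge 2^t-2$. If that fails, the fallback is to show that every monomial of $(g^{(3)}_{2^{m}-4})^{2^{i+1}}$ contains $w_2^{2^{t-2}}$ and then invoke (a).

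For (c) I would follow the same pattern. I would multiply a relation whose leading part is $w_4^{2^{t-2}-2}$ by $w_2^{3\cdot 2^{t-2}}$, and then show that all the $w_4$-lower coefficients, which are polynomials in $w_2,w_3$, lie in $I_{2^t,3}$. Candidate relations are Lemma \ref{lem: uzas1} in degree $2^{t-1}$ and the squared Rusin expansion above combined with $g_{2^t-5}$. The coefficients should be controlled using $w_2^{2^t-3}\in I_{2^t,3}$ and the relation $w_2^{2^t-4}+w_2^{2^{t-2}-1}w_3^{2^{t-1}-2}\in I_{2^t,3}$ from \cite[(4.3)]{CP-cl}.

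If no clean identity appears, I would instead compute the normal form of $w_2^{3\cdot2^{t-2}}w_4^{2^{t-2}-2}$ modulo $F_{2^t-2}$. This monomial is reducible only through $\lm(F)$; for instance $w_2^{2^{t-1}-1}$ divides it. Reducing by $F$ never raises the $w_4$-exponent, and the remaining steps use $w_3w_4^{2^{t-2}-2}$. The aim would be to show that the reduction terminates at $0$.

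I expect (c) to be the main obstacle, with the coefficient check in (b) a lesser one. For (a) the identity is exact, whereas in (c) it is not clear in advance which element of $I_{2^t-2,4}$ has the right $w_4$-leading part after multiplication by a power of $w_2$.
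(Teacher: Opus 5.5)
\textbf{Part (c) has a genuine gap.}

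The claim cannot be checked coefficientwise. Since $\htt(\w_2)=2^t-4\ge 3\cdot 2^{t-2}$ in $H^*(\gr_{2^t,3})$, the coefficient $w_2^{3\cdot2^{t-2}}$ of $w_4^{2^{t-2}-2}$ is not in $I_{2^t,3}$, so the argument of Lemma \ref{lem: Vuk} does not apply. Any proof must use $g_{2^t-5}$: it is the only element of $F_{2^t-2}\setminus F$ whose leading monomial divides monomials of $w_4$-degree $2^{t-2}-2$. That leading monomial, $w_3w_4^{2^{t-2}-2}$, carries a factor $w_3$.

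None of your ingredients provides this.
\begin{itemize}
\item Lemma \ref{lem: uzas1} for $t-1$ and the squared expansion (\ref{eq: Rusin}) write $w_4^{2^{t-2}-2}$ through $g_r$ with $r<2^t-5$. These are not in $I_{2^t-2,4}$, so multiplying by $w_2^{3\cdot2^{t-2}}$ gives no ideal element.
\item The facts $w_2^{2^t-3}\in I_{2^t,3}$ and \cite[(4.3)]{CP-cl} concern $w_2$-powers $\ge 2^t-4$, which exceed $3\cdot2^{t-2}$ once $t\ge5$. For $t=4$, (4.3) does give $w_2^{12}\equiv w_2^3w_3^6$, and then (b) finishes.
\item The normal-form fallback only restates the claim.
\end{itemize}

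The missing idea is a relation in $I_{2^t,3}$ that trades $w_2^{3\cdot2^{t-2}}$ for multiples of $w_3^{2^{t-2}-1}$. The paper uses \cite[Corollary 3.8]{CPR}: $w_2^{2^{t-1}+2^{t-2}}+\sum_{i=1}^{t-3}w_2^{2^i+2^{i-1}}w_3^{2^{t-1}-2^i}\in I_{2^t+2^{t-2}+2,3}\subseteq I_{2^t-2,4}$. Since $2^{t-1}-2^i\ge 2^{t-2}-1$ for $i\le t-3$, each summand times $w_4^{2^{t-2}-2}$ vanishes by (b).

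\textbf{Part (a)} is correct and self-contained. You derive $w_2^{2^{t-2}}w_3^{2^{t-2}-1}=g_{2^t+2^{t-2}-3}$ directly for $k=4$. The paper instead quotes it for $k=3$ from \cite[Proposition 2.2(c)]{CP-cl} and transfers it via Lemma \ref{utapanje}. Correct your indices: the $w_3$- and $w_4$-terms are $w_3^{2^{t-2}}g_{2^{t-1}-3}$ and $w_4^{2^{t-2}}g_{2^{t-2}-3}$. Both vanish by (\ref{eq:g=0}), the second using $t\ge4$.

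\textbf{Part (b)}: your reduction is correct, and the coefficient memberships you need are true, but you have not proved them. Your fallback is false as stated, since $g^{(3)}_{12}=w_2^6+w_3^4$ has a monomial with no $w_2$. Here is how to finish.
\begin{itemize}
\item Iterating (\ref{w_3kvadrat}) gives $w_3^{2^j-1}g_r^{2^j}=g_{2^jr+3(2^j-1)}$. This also holds for $g^{(3)}$, by reduction modulo $w_4$.
\item Hence $w_3^{2^{i+1}-1}\big(g^{(3)}_{2^{t-1-i}-4}\big)^{2^{i+1}}=g^{(3)}_{2^t-2^{i+1}-3}$.
\item Applying (\ref{eq_rec_g_gen}) with $k=3$, $s=i+1$, together with $g^{(3)}_{2^t-3}=0$, gives $w_3^{2^{i+1}}g^{(3)}_{2^t-2^{i+1}-3}=g^{(3)}_{2^t-3+2^{i+2}}$.
\item So your coefficient equals $w_3^{2^{t-2}-2^{i+2}}g^{(3)}_{2^t-3+2^{i+2}}$. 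This lies in $I_{2^t,3}$ because $i\le t-4$ and $2^t-3+2^{i+2}\ge 2^t-2$.
\end{itemize}
This reproduces exactly the identity (\ref{eq:rnd1-lema57}), obtained directly rather than by the paper's induction on $t$.
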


\begin{proof}
(a) We want to prove that $w_2^{2^{t-2}}w_3^{2^{t-2}-1}\in I_{2^t-2,4}$. By \cite[Proposition 2.2(c)]{CP-cl} and (\ref{g_rinIn,k}), $w_2^{2^{t-2}}w_3^{2^{t-2}-1}=g_{2^t+2^{t-2}-3}^{(3)}\in I_{2^t,3}$, and Lemma \ref{utapanje}, along with (\ref{In,k opada}), implies $w_2^{2^{t-2}}w_3^{2^{t-2}-1}\in I_{2^t,4}\subseteq I_{2^t-2,4}$.
  
    \medskip
    
 (b) Since $g_r\in I_{2^t-2,4}$ for all $r\ge2^t-5$, the identity $\w_3^{2^{t-2}-1}\w_4^{2^{t-2}-2}=0$ (i.e., $w_3^{2^{t-2}-1}w_4^{2^{t-2}-2}\in I_{2^t-2,4}$) is a consequence of the following relation in $\F_2[w_2,w_3,w_4]$: 
    \begin{equation}\label{eq:rnd1-lema57}
        w_3^{2^{t-2} - 1}w_4^{2^{t-2} - 2} = w_3^{2^{t-2} - 2}g_{2^t - 5} + \sum_{i = 2}^{t-2}w_3^{2^{t-2} - 2^i}w_4^{2^{i-1}-2}  g_{2^t -3 + 2^i}.
        \end{equation}
This relation actually holds for $t=3$ as well (as usual, the empty sum is understood to be zero), and we are going to prove it by induction on $t\ge3$. 

For $t=3$, (\ref{eq:rnd1-lema57}) simplifies to $w_3=g_3$, which is obviously true (see (\ref{eq_explicitly_g})).
    
Now let $t\ge4$, and assume that the claim holds for $t-1$, that is
    \[w_3^{2^{t-3} - 1}w_4^{2^{t-3} - 2} = w_3^{2^{t-3} - 2}g_{2^{t-1} - 5} + \sum_{i = 2}^{t-3}w_3^{2^{t-3} - 2^i}w_4^{2^{i-1}-2} g_{2^{t-1} -3 + 2^i}.\]
    Squaring both sides and multiplying them by $w_3w_4^2$ yields
    \[w_3^{2^{t-2} - 1}w_4^{2^{t-2} - 2} = w_3^{2^{t-2} - 3}w_4^2\big(g_{2^{t-1} - 5}\big)^2 + \sum_{i = 2}^{t-3}w_3^{2^{t-2} - 2^{i+1}+1}w_4^{2^{i}-2}\big(g_{2^{t-1} -3 + 2^i}\big)^2.\]
    By (\ref{w_3kvadrat}), $w_3(g_{2^{t-1}-5})^2 = g_{2^t -7}$ and $w_3(g_{2^{t-1} -3 + 2^i})^2 = g_{2^t-3 + 2^{i+1}}$, so after applying this, and shifting the summation index, we get
     \begin{equation}\label{eq: prosla}w_3^{2^{t-2} - 1}w_4^{2^{t-2} - 2} = w_3^{2^{t-2} - 4}w_4^2g_{2^t-7} + \sum_{i = 3}^{t-2}w_3^{2^{t-2} - 2^{i}}w_4^{2^{i-1}-2} g_{2^{t} -3 + 2^i.}
     \end{equation}
    Finally, by equation (\ref{eq_rec_g_gen}) for $k=4$, $r = 2^t+1$ and $s= 1$, we have
    \[g_{2^t +1} = w_2^2 g_{2^t-3} + w_3^2g_{2^t - 5} + w_4^2g_{2^t-7} = w_3^2g_{2^t - 5} + w_4^2g_{2^t-7}.\]
    Therefore, we can substitute $w_3^2g_{2^t - 5}+g_{2^t +1}$ for $w_4^2g_{2^t-7}$ in (\ref{eq: prosla}), and after doing so, we get \eqref{eq:rnd1-lema57}. This completes the induction step.

    \medskip
    
(c) It was proved in \cite[Corollary 3.8]{CPR} that 
\[w_2^{2^{t-1}+2^{t-2}}+\sum_{i=1}^{t-3}w_2^{2^i+2^{i-1}}w_3^{2^{t-1}-2^i}\in I_{2^t+2^{t-2}+2,3}.\]
On the other hand, we know that $I_{2^t+2^{t-2}+2,3}\subseteq I_{2^t,3}\subseteq I_{2^t,4}\subseteq I_{2^t-2,4}$ (by (\ref{In,k opada}) and Lemma \ref{utapanje}). Therefore, in $W_{2^t-2,4}$ we have $\w_2^{2^{t-1}+2^{t-2}}=\sum_{i=1}^{t-3}\w_2^{2^i+2^{i-1}}\w_3^{2^{t-1}-2^i}$, which means that
\[\w_2^{2^{t-1}+2^{t-2}}\w_4^{2^{t-2}-2}=\sum_{i=1}^{t-3}\w_2^{2^i+2^{i-1}}\w_3^{2^{t-1}-2^i}\w_4^{2^{t-2}-2}=0,\]
by part (b), since $2^{t-1}-2^i\ge2^{t-1}-2^{t-3}>2^{t-2} - 1$ for all $i\in\{1,\ldots,t-3\}$.
\end{proof}

We are finally ready to establish lower bounds for $\zcl(W_{n,4})$.

\begin{proposition}\label{zcl:dostizanje 2^t-2}
    For all integers $t\ge5$ and $n\ge2^t-2$ one has 
    \[\zcl(W_{n,4})\ge2^t+2^{t-1}+2^{t-2}-5.\]
\end{proposition}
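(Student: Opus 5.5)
The plan is to lift a longest nonzero product of zero-divisors in $W_{2^t-1,3}\otimes W_{2^t-1,3}$ along $\psi\otimes\psi$ and multiply it by $z(\w_4)^{2^{t-2}-1}$. By (\ref{zcl(W_{n,k}) increases}) it suffices to treat $n=2^t-2$. From \cite{CPR} I take a product $z=z_1\cdots z_s\neq0$ of zero-divisors in $W_{2^t-1,3}\otimes W_{2^t-1,3}$ with $s=\zcl(W_{2^t-1,3})$. The count needs $s=2^t+2^{t-1}-4$, since then $s+(2^{t-2}-1)$ is the claimed bound. I expect the $z_i$ to be of the form $z(\w_2)$ and $z(\w_3)$, so that $z=z(\w_2)^{a}z(\w_3)^{b}$ with $a+b=s$.

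Because $\psi$ is an algebra morphism, $\psi\otimes\psi$ is one too, and it commutes with the multiplication maps. Hence each $(\psi\otimes\psi)(z_i)$ is a zero-divisor of positive degree in $W_{2^t-2,4}\otimes W_{2^t-2,4}$; concretely $(\psi\otimes\psi)(z(\w_j))=z(\w_j)$. Over $\F_2$ we have $z(\w_4)^{2^{t-2}-1}=\sum_{d=0}^{2^{t-2}-1}\w_4^d\otimes\w_4^{2^{t-2}-1-d}$, since all binomial coefficients $\binom{2^{t-2}-1}{d}$ are odd. So it suffices to show
\[(\psi\otimes\psi)(z)\cdot z(\w_4)^{2^{t-2}-1}\neq0 .\]
By Lemma \ref{psi_tenzor_psi} (this is where $t\ge5$ is used), the contribution of $2\le d\le 2^{t-2}-3$ is nonzero. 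It therefore remains to show that the four boundary terms, $d\in\{0,1,2^{t-2}-2,2^{t-2}-1\}$, vanish. These are exactly the terms in which one tensor factor is multiplied by $\w_4^e$ with $e\ge 2^{t-2}-2$.

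To kill them, I expand $z$ in the basis $\{\sigma\otimes\tau\mid\sigma,\tau\in B\}$. Equivalently, I expand $z(\w_2)^a z(\w_3)^b$ as a sum of terms $\w_2^{i}\w_3^{j}\otimes\w_2^{a-i}\w_3^{b-j}$. The goal is to show that in every surviving term both tensor factors are annihilated by $\w_4^{2^{t-2}-2}$ in $W_{2^t-2,4}$. For this I use Lemma \ref{identiteti:2^t-2}. Part (b) kills any factor with $\w_3$-exponent at least $2^{t-2}-1$, part (c) kills any factor with $\w_2$-exponent at least $2^{t-1}+2^{t-2}$, and part (a) makes a factor vanish outright when both exponents are moderately large.

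The expected argument is a pigeonhole on exponents. Since $a+b=s$ is large, and $\htt(\w_2)=2^t-4$, $\htt(\w_3)=2^{t-1}-2$ (Theorem \ref{prop:ht}) bound each side, the split of $(a,b)$ between the two tensor factors forces each side either to be zero in $W_{2^t-2,4}$ or to contain $\w_3^{2^{t-2}-1}$ or $\w_2^{2^{t-1}+2^{t-2}}$. In that case multiplying by $\w_4^{2^{t-2}-2}$ (hence also by $\w_4^{2^{t-2}-1}$) gives zero. The same analysis applies with the roles of the two factors interchanged.

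The main obstacle is this last verification. It depends on the precise form of the maximal product from \cite{CPR}, namely the exponents $a,b$ (or possibly other zero-divisors such as $z(\w_2^{2^i})$). A side cannot be ruled out by exponent counting when it has small $\w_3$-exponent and a $\w_2$-exponent between $2^{t-2}$ and $3\cdot2^{t-2}$. For such sides I would first try to reduce them via the relation from \cite[Corollary 3.8]{CPR} and the Gr\"obner basis $F$, rewriting them into monomials to which (a), (b) or (c) apply. If that fails, I would choose a different maximal product in \cite{CPR}, one with more $z(\w_3)$ factors, so that the $\w_3$-exponents on both sides are forced above $2^{t-2}-1$.
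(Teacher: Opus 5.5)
Your strategy is the paper's: reduce to $n=2^t-2$ via (\ref{zcl(W_{n,k}) increases}), push a nonzero product forward along $\psi\otimes\psi$, multiply by $z(\w_4)^{2^{t-2}-1}=\sum_d\w_4^d\otimes\w_4^{2^{t-2}-1-d}$, handle $2\le d\le 2^{t-2}-3$ by Lemma \ref{psi_tenzor_psi}, and kill the four boundary terms with Lemma \ref{identiteti:2^t-2}. But the proposal stops at the step that carries the content, so as written it is not a proof.

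\textbf{First gap: the input is not pinned down.} An abstract product realizing $\zcl(W_{2^t-1,3})$, presumably of the form $z(\w_2)^az(\w_3)^b$, is not enough, because the vanishing of the boundary terms depends on the exponents. What you need is the specific element $z=z(\w_2)^{2^t-1}z(\w_3)^{2^{t-1}-3}$. It is nonzero in $W_{2^t-1,3}\otimes W_{2^t-1,3}$ by \cite[Proposition 3.9 and Lemma 3.4]{CPR}, and it has the required length $2^t+2^{t-1}-4$. Maximality plays no role.

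\textbf{Second gap: the verification is missing, and your anticipated obstacle does not occur.} For $d\in\{0,1\}$ the boundary term is a sum of summands $\w_2^b\w_3^c\w_4^d\otimes\w_2^{2^t-1-b}\w_3^{2^{t-1}-3-c}\w_4^{2^{t-2}-1-d}$. The exponents on the two sides have fixed sums, so argue as follows.
\begin{itemize}
\item If $b\ge2^{t-2}$ and $c\ge2^{t-2}-1$, the left factor is $0$ by (a).
\item If $b\le2^{t-2}-1$, the right $\w_2$-exponent is at least $2^{t-1}+2^{t-2}$ and the right $\w_4$-exponent is at least $2^{t-2}-2$, so (c) kills the right factor.
\item If $c\le2^{t-2}-2$, the right $\w_3$-exponent is at least $2^{t-2}-1$, so (b) kills the right factor.
\end{itemize}
The cases $d\in\{2^{t-2}-2,2^{t-2}-1\}$ follow from the swap $\sigma\otimes\tau\mapsto\tau\otimes\sigma$. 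This swap fixes $(\psi\otimes\psi)(z)$ and exchanges $x_d$ with $x_{2^{t-2}-1-d}$.

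The configuration you worry about is a factor with $\w_3$-exponent at most $2^{t-2}-2$ and $\w_2$-exponent in $[2^{t-2},3\cdot2^{t-2})$. Such a factor never has to be killed. Its partner then has $\w_3$-exponent at least $2^{t-2}-1$ and $\w_2$-exponent at least $2^{t-2}$, so the partner is already $0$ by (a). The point is to treat the two factors of each summand jointly, not each side in isolation. No reduction via \cite[Corollary 3.8]{CPR} or the Gr\"obner basis $F$ is needed, no alternative product is needed, and the heights from Theorem \ref{prop:ht} play no role.
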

\begin{proof}
    We have already outlined the idea of the proof prior to Lemma \ref{identiteti:2^t-2}. First of all, according to (\ref{zcl(W_{n,k}) increases}), it is enough to prove $\zcl(W_{2^t-2,4})\ge2^t+2^{t-1}+2^{t-2}-5$. This inequality will follow from the fact $z(\w_2)^{2^t-1}z(\w_3)^{2^{t-1}-3}z(\w_4)^{2^{t-2}-1}\ne0$ in $W_{2^t-2,4}\otimes W_{2^t-2,4}$, which we now prove.

    \medskip

    Let $z:=z(\w_2)^{2^t-1}z(\w_3)^{2^{t-1}-3}\in W_{2^t-1,3}\otimes W_{2^t-1,3}$. It was proved in \cite[Proposition 3.9 and Lemma 3.4]{CPR} that $z\ne0$. We have constructed the algebra morphism $\psi:W_{2^t-1,3}\arr W_{2^t-2,4}$ with the properties $\psi(\w_2)=\w_2$ and $\psi(\w_3)=\w_3$. Therefore,
    \[(\psi\otimes\psi)\big(z(\w_2)\big)=(\psi\otimes\psi)\big(1\otimes\w_2+\w_2\otimes1\big)=1\otimes\w_2+\w_2\otimes1=z(\w_2),\]
    and similarly, $(\psi\otimes\psi)\big(z(\w_3)\big)=z(\w_3)$. The tensor product $\psi\otimes\psi:W_{2^t-1,3}\otimes W_{2^t-1,3}\arr W_{2^t-2,4}\otimes W_{2^t-2,4}$ is also an algebra morphism, and so
    \begin{equation}\label{(psi_tenzor_psi)(z)}
    (\psi\otimes\psi)(z)=z(\w_2)^{2^t-1}z(\w_3)^{2^{t-1}-3}.
    \end{equation}
On the other, by binomial formula the zero-divisor $z(\w_4)^{2^{t-2}-1}$ is equal to
\[(1\otimes\w_4+\w_4\otimes1)^{2^{t-2}-1}=\sum_{d=0}^{2^{t-2}-1}\binom{2^{t-2}-1}{ d}\w_4^d\otimes\w_4^{2^{t-2}-1-d}=\sum_{d=0}^{2^{t-2}-1}\w_4^d\otimes\w_4^{2^{t-2}-1-d}\]
(since $\binom{2^{t-2}-1}{d}\equiv1\pmod2$ if $0\le d\le2^{t-2}-1$), and therefore,
\[
   z(\w_2)^{2^t-1}z(\w_3)^{2^{t-1}-3}z(\w_4)^{2^{t-2}-1}=(\psi\otimes\psi)(z)\cdot\sum_{d=0}^{2^{t-2}-1}\w_4^d\otimes\w_4^{2^{t-2}-1-d}.
\]
Since $z\ne0$, by Lemma \ref{psi_tenzor_psi} we know that 
\[(\psi\otimes\psi)(z)\cdot\sum_{d=2}^{2^{t-2}-3}\w_4^d\otimes\w_4^{2^{t-2}-1-d}\ne0,\] and so it is now sufficient to verify that for $d\in\{0,1,2^{t-2}-2,2^{t-2}-1\}$ the element $x_d:=(\psi\otimes\psi)(z)\cdot(\w_4^d\otimes\w_4^{2^{t-2}-1-d})\in W_{2^t-2,4}\otimes W_{2^t-2,4}$ vanishes. 

Using (\ref{(psi_tenzor_psi)(z)}) we calculate:
\begin{align*}
    x_d&=z(\w_2)^{2^t-1}z(\w_3)^{2^{t-1}-3}\big(\w_4^d\otimes\w_4^{2^{t-2}-1-d}\big)\\
    &=\sum_{b=0}^{2^t-1}\w_2^b\otimes\w_2^{2^t-1-b}\cdot\sum_{c=0}^{2^{t-1}-3}\binom{2^{t-1}-3}{c}\w_3^c\otimes\w_3^{2^{t-1}-3-c}\cdot\big(\w_4^d\otimes\w_4^{2^{t-2}-1-d}\big)\\
    &=\sum_{b=0}^{2^t-1}\sum_{c=0}^{2^{t-1}-3}\binom{2^{t-1}-3}{c}\w_2^b\w_3^c\w_4^d\otimes\w_2^{2^t-1-b}\w_3^{2^{t-1}-3-c}\w_4^{2^{t-2}-1-d}.
\end{align*}
For $0\le d\le1$ let us show that in fact every summand of $x_d$ is equal to zero. Pick an arbitrary summand $s_d(b,c)=\w_2^b \w_3^c \w_4^d \otimes \w_2^{2^t - 1- b} \w_3^{2^{t-1} - 3 - c} \w_4^{2^{t-2} - 1-d}$. If $b \ge 2^{t-2}$ and $c \ge 2^{t-2} -1$, then the first coordinate of $s_d(b,c)$ is zero by Lemma \ref{identiteti:2^t-2}(a). Otherwise, if either $b \le 2^{t-2} - 1$ or $c \le 2^{t-2} - 2$, then the second coordinate vanishes by parts (b) and (c) of the same lemma. We conclude $x_d=0$.

It is not hard to check that $x_{2^{t-2}-1-d}=T(x_d)$, where $T:W_{2^t-2,4}\otimes W_{2^t-2,4}\arr W_{2^t-2,4}\otimes W_{2^t-2,4}$ interchanges the coordinates (it is defined on simple tensors by $T(\sigma\otimes\tau)=\tau\otimes\sigma$). Therefore, $x_d=0$ holds for $2^{t-2}-2\le d\le2^{t-2}-1$ as well. This concludes the proof.
\end{proof}

\begin{remark}
    We believe that the inequality from the previous proposition is an equality for $n\in\{2^t-2,2^t-1,2^t,2^t+1\}$. Computer calculations confirmed our assumption for $5\le t\le 20$.
\end{remark}

\begin{remark}\label{remark2}
    The proposition does not hold for $t=4$. It is readily seen from its proof that the zero-divisor $z(\w_2)^{2^t-1}z(\w_3)^{2^{t-1}-3}z(\w_4)^{2^{t-2}-1}=z(\w_2)^{15}z(\w_3)^5z(\w_4)^3$ vanishes in $W_{14,4}\otimes W_{14,4}$. Furthermore, it can be proved that $\zcl(W_{14,4})=21$. 

    However, $z(\w_2)^{15}z(\w_3)^5z(\w_4)^3\ne0$ in $W_{15,4}\otimes W_{15,4}$, so Proposition \ref{zcl:dostizanje 2^t-2} does hold for $t=4$ and $n\ge15$. Moreover, $\zcl(W_{15,4})=\zcl(W_{16,4})=\zcl(W_{17,4})=23$.

    Let us also remark that one can show that $\zcl(W_{8,4})=\zcl(W_{9,4})=8$.

    We have checked all these listed results by using computer software SageMath \cite{Sage}.
\end{remark}

Using Proposition \ref{prop:zcl(Gr)>=1+zcl(W)} we immediately get the following corollary.

\begin{corollary}\label{prop:finale}
    For all integers $t\ge5$ and $n\ge2^t-2$ one has 
    \[\zcl(\gr_{n,4})\ge2^t+2^{t-1}+2^{t-2}-4.\]
\end{corollary}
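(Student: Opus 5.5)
This is a direct combination of two earlier results, Proposition \ref{zcl:dostizanje 2^t-2} and Proposition \ref{prop:zcl(Gr)>=1+zcl(W)}.

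First, fix $t\ge5$ and $n\ge2^t-2$. By Proposition \ref{zcl:dostizanje 2^t-2} we have
\[\zcl(W_{n,4})\ge2^t+2^{t-1}+2^{t-2}-5.\]
That proposition was proved for $n=2^t-2$ via the explicit nonzero product $z(\w_2)^{2^t-1}z(\w_3)^{2^{t-1}-3}z(\w_4)^{2^{t-2}-1}$. It was then transported to all larger $n$ by the monotonicity (\ref{zcl(W_{n,k}) increases}).

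Second, check the hypothesis $n\ge2k\ge4$ of Proposition \ref{prop:zcl(Gr)>=1+zcl(W)} with $k=4$. It requires $n\ge8$, and indeed $n\ge2^t-2\ge30$. Applying that proposition gives
\[\zcl(\gr_{n,4})\ge1+\zcl(W_{n,4})\ge2^t+2^{t-1}+2^{t-2}-4.\]

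Recall how the extra $1$ arises. A nonzero product of zero-divisors realizing $\zcl(W_{n,4})$ lies in $W_{n,4}\otimes W_{n,4}$, and $W_{n,4}$ has nothing in top degree $4(n-4)$. Poincar\'e duality then supplies a class $a$ of complementary degree with $z(a)\cdot z\ne0$.

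There is no real obstacle here. The only points to verify are the ranges of the parameters. The condition $t\ge5$ is needed because Proposition \ref{zcl:dostizanje 2^t-2} fails for $t=4$, $n=14$ (Remark \ref{remark2}). Finally, one may add that together with (\ref{eq:TC>1+zcl}) this yields $\tc(\gr_{n,4})\ge2^t+2^{t-1}+2^{t-2}-3$.
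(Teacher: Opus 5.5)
Your proposal is correct and is exactly the paper's argument: the corollary follows immediately by combining the lower bound $\zcl(W_{n,4})\ge 2^t+2^{t-1}+2^{t-2}-5$ (valid for $t\ge5$, $n\ge 2^t-2$) with the inequality $\zcl(\gr_{n,4})\ge 1+\zcl(W_{n,4})$, whose hypothesis $n\ge 8$ holds since $n\ge 30$. Your remarks on why $t\ge5$ is needed and on the Poincar\'e-duality mechanism behind the extra $1$ are also accurate.
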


Finally, (\ref{eq:TC>1+zcl}) produces lower bounds for the topological complexity of oriented Grassmann manifolds $\gr_{n,4}$.
\begin{theorem} For all integers $t\ge5$ and $n\ge2^t-2$ one has 
    \[\tc(\gr_{n,4})\ge2^t+2^{t-1}+2^{t-2}-3.\]
\end{theorem}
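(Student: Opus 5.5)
This is an immediate consequence of the chain of results just established, so the plan is a short chain of inequalities. Fix integers $t\ge5$ and $n\ge2^t-2$.

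First, Proposition \ref{zcl:dostizanje 2^t-2} gives
\[\zcl(W_{n,4})\ge2^t+2^{t-1}+2^{t-2}-5.\]
Behind it is the nonvanishing of $z(\w_2)^{2^t-1}z(\w_3)^{2^{t-1}-3}z(\w_4)^{2^{t-2}-1}$ in $W_{2^t-2,4}\otimes W_{2^t-2,4}$, transported to larger $n$ by the monotonicity (\ref{zcl(W_{n,k}) increases}). Next, since $n\ge2^t-2\ge8$, the hypothesis $n\ge2k\ge4$ of Proposition \ref{prop:zcl(Gr)>=1+zcl(W)} is satisfied for $k=4$. That proposition then adds one more factor, using Poincar\'e duality and the fact that $W_{n,4}$ misses the top degree, to give
\[\zcl(\gr_{n,4})\ge1+\zcl(W_{n,4})\ge2^t+2^{t-1}+2^{t-2}-4,\]
which is Corollary \ref{prop:finale}.

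Finally, $\zcl$ is taken with $\F_2$ coefficients, so the general inequality (\ref{eq:TC>1+zcl}) with $R=\F_2$ gives $\tc(\gr_{n,4})>\zcl(\gr_{n,4})$. Both sides are integers, so
\[\tc(\gr_{n,4})\ge\zcl(\gr_{n,4})+1\ge2^t+2^{t-1}+2^{t-2}-3.\]

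There is no real obstacle here. The only points to check are that the hypotheses of the cited propositions hold in the range $t\ge5$, $n\ge2^t-2$, and that the strict inequality in (\ref{eq:TC>1+zcl}) is what turns the bound from Corollary \ref{prop:finale} into the stated one.
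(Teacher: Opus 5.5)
Your proposal is correct and follows the paper's own route exactly: Proposition \ref{zcl:dostizanje 2^t-2} gives the bound on $\zcl(W_{n,4})$, Proposition \ref{prop:zcl(Gr)>=1+zcl(W)} adds one to get Corollary \ref{prop:finale}, and the strict inequality (\ref{eq:TC>1+zcl}) between integers gives the stated bound on $\tc(\gr_{n,4})$. Your check that the hypothesis $n\ge2k$ holds for $k=4$ (indeed $n\ge 30$ here) is the only verification needed.
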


\begin{remark}
    This theorem covers the cases $n\ge30$. For $n\le29$ in the same way one can obtain lower bounds for $\tc(\gr_{n,4})$ from the results listed in Remark \ref{remark2}.
\end{remark}

\bibliographystyle{amsplain}

\end{document}